\begin{document}
\newcommand{\dyle}{\displaystyle}
\newcommand{\R}{{\mathbb{R}}}
\newcommand{\ubar}[1]{\underaccent{\bar}{#1}}
\newcommand{\Hi}{{\mathbb H}}
\newcommand{\Ss}{{\mathbb S}}
\newcommand{\N}{{\mathbb N}}
\newcommand{\Rn}{{\mathbb{R}^n}}
\newcommand{\ieq}{\begin{equation}}
\newcommand{\eeq}{\end{equation}}
\newcommand{\ieqa}{\begin{eqnarray}}
\newcommand{\eeqa}{\end{eqnarray}}
\newcommand{\ieqas}{\begin{eqnarray*}}
\newcommand{\eeqas}{\end{eqnarray*}}
\newcommand{\Bo}{\put(260,0){\rule{2mm}{2mm}}\\}
\def\L#1{\label{#1}} \def\R#1{{\rm (\ref{#1})}}

%- Theorems and similar stuff: ------

\theoremstyle{plain}
\newtheorem{theorem}{Theorem} [section]
\newtheorem{corollary}[theorem]{Corollary}
\newtheorem{lemma}[theorem]{Lemma}
\newtheorem{proposition}[theorem]{Proposition}
\newtheorem{assumption}[theorem]{Assumption}
\def\neweq#1{\begin{equation}\label{#1}}
\def\endeq{\end{equation}}
\def\eq#1{(\ref{#1})}
%- Definitions -----------------------------

\theoremstyle{definition}
\newtheorem{definition}[theorem]{Definition}
\newtheorem{remark}[theorem]{Remark}

%- Numeracion ------------------------------------ \numberwithin{theorem}{section} \numberwithin{equation}{section}
\numberwithin{figure}{section}
\newcommand{\res}{\mathop{\hbox{\vrule height 7pt width .5pt depth
0pt \vrule height .5pt width 6pt depth 0pt}}\nolimits}
\def\at#1{{\bf #1}: } \def\att#1#2{{\bf #1}, {\bf #2}: }
\def\attt#1#2#3{{\bf #1}, {\bf #2}, {\bf #3}: } \def\atttt#1#2#3#4{{\bf #1}, {\bf #2}, {\bf #3},{\bf #4}: }
\def\aug#1#2{\frac{\displaystyle #1}{\displaystyle #2}} \def\figura#1#2{ \begin{figure}[ht] \vspace{#1} \caption{#2}
\end{figure}} \def\B#1{\bibitem{#1}} \def\q{\int_{\Omega^\sharp}}
\def\z{\int_{B_{\bar{\rho}}}\underline{\nu}\nabla (w+K_{c})\cdot
\nabla h} \def\a{\int_{B_{\bar{\rho}}}}
\def\b{\cdot\aug{x}{\|x\|}}
\def\n{\underline{\nu}} \def\d{\int_{B_{r}}}
\def\e{\int_{B_{\rho_{j}}}} \def\LL{{\mathcal L}}
\def\itr{\mathrm{Int}\,}
\def\D{{\mathcal D}}
 \def\tg{\tilde{g}}
\def\A{{\mathcal A}}
\def\S{{\mathcal S}}
\def\H{{\mathcal H}}
\def\M{{\mathcal M}}
\def\T{{\mathcal T}}
\def\U{{\mathcal U}}
\def\N{{\mathcal N}}
\def\I{{\mathcal I}}
\def\F{{\mathcal F}}
\def\J{{\mathcal J}}
\def\E{{\mathcal E}}
\def\F{{\mathcal F}}
\def\G{{\mathcal G}}
\def\HH{{\mathcal H}}
\def\W{{\mathcal W}}
\def\H{\D^{2*}_{X}}
\def\d{d^X_M }
\def\LL{{\mathcal L}}
\def\H{{\mathcal H}}
\def\HH{{\mathcal H}}
\def\itr{\mathrm{Int}\,}
\def\vah{\mbox{var}_\Hi}
\def\vahh{\mbox{var}_\Hi^1}
\def\vax{\mbox{var}_X^1}
\def\va{\mbox{var}}
\def\SS{{\mathcal S}}
 \def\Y{{\mathcal Y}}
\def\length{{l_\Hi}}
\newcommand{\average}{{\mathchoice {\kern1ex\vcenter{\hrule
height.4pt width 6pt depth0pt} \kern-11pt} {\kern1ex\vcenter{\hrule height.4pt width 4.3pt depth0pt} \kern-7pt} {} {} }}
\def\weak{\rightharpoonup}
\def\detu{{\rm det}(D^2u)}
\def\detut{{\rm det}(D^2u(t))}
\def\detvt{{\rm det}(D^2v(t))}
\def\detv{{\rm det}(D^2v)}
\def\uuu{u_xu_yu_{xy}}
\def\uuut{u_x(t)u_y(t)u_{xy}(t)}
\def\uuus{u_x(s)u_y(s)u_{xy}(s)}
\def\uuutn{u_x(t_n)u_y(t_n)u_{xy}(t_n)}
\def\vvv{v_xv_yv_{xy}}
\newcommand{\ave}{\average\int}

\title[Radial biharmonic $k-$Hessian equations]{Radial biharmonic $k-$Hessian equations: The critical dimension}

\author[C. Escudero, P. J. Torres]{Carlos Escudero, Pedro J. Torres}
\address{}
\email{}
\keywords{$k-$Hessian type equations, Existence of solutions,
Multiplicity of solutions, Non-existence of solutions.
\\ \indent 2010 {\it MSC: 34B08, 34B16, 34B40, 35G30.}}

\date{\today}

\begin{abstract}
This work is devoted to the study of radial solutions to the elliptic problem
\begin{equation}\nonumber
\Delta^2 u = (-1)^k S_k[u] + \lambda f, \qquad x \in B_1(0) \subset \mathbb{R}^N,
\end{equation}
provided either with Dirichlet boundary conditions
\begin{eqnarray}\nonumber
u = \partial_n u = 0, \qquad x \in \partial B_1(0),
\end{eqnarray}
or Navier boundary conditions
\begin{equation}\nonumber
u = \Delta u = 0, \qquad x \in \partial B_1(0),
\end{equation}
where the $k-$Hessian $S_k[u]$ is the $k^{\mathrm{th}}$ elementary symmetric polynomial of eigenvalues of the Hessian matrix
and the datum $f \in L^1(B_1(0))$. We also study the existence of entire solutions to this partial differential equation
in the case in which they are assumed to decay to zero at infinity and under analogous conditions of summability on the datum.
Our results illustrate how, for $k=2$, the dimension $N=4$ plays the role of critical dimension separating two different phenomenologies
below and above it.
\end{abstract}
\renewcommand{\thefootnote}{\fnsymbol{footnote}}
\setcounter{footnote}{-1}
\footnote{This work has been partially supported by the Government of Spain (Ministry of Economy, Industry and Competitiveness)
through Project MTM2015-72907-EXP.}
\renewcommand{\thefootnote}{\arabic{footnote}}
\maketitle

\section{Introduction}

The main objective of this paper is to study radial solutions to elliptic equations of the form
\begin{equation}\label{rkhessian}
\Delta^2 u = (-1)^k S_k[u] + \lambda f, \qquad x \in B_1(0) \subset \mathbb{R}^N,
\end{equation}
where $N, \, k \, \in \mathbb{N}$, $\lambda \in \mathbb{R}$ and $f: B_1(0) \subset \mathbb{R}^N \longrightarrow \mathbb{R}$ is absolutely
integrable. The nonlinearity $S_k[u]$ in~\eqref{rkhessian} is the $k-$Hessian, that is, the sum of the $k^{\mathrm{th}}$ principal minors of the Hessian
matrix $(D^2 u)$. For $k=1$ equation~\eqref{rkhessian} becomes linear, while it is semilinear in the range $2 \le k \le N$.
We will also be interested in finding entire solutions in the whole of $\mathbb{R}^N$ under the assumption $f \in L^1(\mathbb{R}^N)$.

The motivation for studying these equations is multiple. These models arise naturally in the fields of analysis of partial differential equations
and condensed matter physics~\cite{n0,escudero,n1,n2,n3,n4,escudero2,n5,n6}. In this Introduction however we will not review these ideas and discuss
instead an intriguing connection to geometry. Critical points of the Willmore functional are critical points of
the functional
$$
\mathcal{W}= \int_\Sigma H^2 \, d\omega,
$$
where $\Sigma$ is a surface embedded in $\mathbb{R}^3$, $H$ is its mean curvature, and $d\omega$ is the area form of
the surface. Finding these critical points is a classical problem in Differential Geometry~\cite{bryant,GGS}. Now
consider the perturbed Willmore problem
$$
\overset{\sim}{\mathcal{W}}= \int_\Sigma \left( H + H^2 \right) d\omega,
$$
take $\Sigma$ to be the graph of a function $u:B_1(0) \longrightarrow \mathbb{R}$, consider the quasi-flat case $|\nabla u| \ll 1$,
and compute the corresponding Euler-Lagrange equation. For suitable sets of boundary conditions one finds this equation is~\eqref{rkhessian}
for $N=k=2$. So in a sense equation~\eqref{rkhessian} is a generalization of this geometric problem. While this general problem may have a
geometric meaning and connections to Physics~\cite{n3}, we will not explore these herein and instead focus on analytical issues.

To be precise, in this work we will study the biharmonic boundary value problem
\begin{eqnarray}\label{dirichlet}
\Delta^2 u = (-1)^k S_k[u] + \lambda f, \qquad x &\in& B_1(0) \subset \mathbb{R}^N, \\ \nonumber
u = \partial_n u = 0,
\qquad x &\in& \partial B_1(0),
\end{eqnarray}
which we refer to as the Dirichlet problem for partial differential equation~\eqref{rkhessian}, as well as
\begin{eqnarray}\label{navier}
\Delta^2 u = (-1)^k S_k[u] + \lambda f, \qquad x &\in& B_1(0) \subset \mathbb{R}^N, \\ \nonumber
u = \Delta u = 0,
\qquad x &\in& \partial B_1(0),
\end{eqnarray}
which we refer to as the Navier problem for partial differential equation~\eqref{rkhessian}. Also, we will be interested in
finding solutions to
\begin{eqnarray}\label{navier}
\Delta^2 u = (-1)^k S_k[u] + \lambda f, \qquad x &\in& \mathbb{R}^N, \\ \nonumber
u \to 0, \qquad |x| &\to& \infty,
\end{eqnarray}
the entire solutions to this partial differential equation. In every case we will consider radial solutions and, except in Section~\ref{conclusions},
we will limit ourselves to the case $k=2$. This work is organized in the following way. In Section~\ref{radialp} we will describe the radial
problems to be studied and our main results will be described. In Section~\ref{lambda0} we state and prove our existence and non-existence theory
for $\lambda=0$ and the problems with $\lambda \neq 0$ are analyzed in Section~\ref{lneq0}. Finally, our main conclusions,
a non-existence result concerning the case $k=3$ and $\lambda=0$, and a series of open questions are included in Section~\ref{conclusions}.

\section{Radial problems}\label{radialp}

The radial problem corresponding to equation~\eqref{rkhessian} reads
\begin{equation}\label{radialhessian}
\frac{1}{r^{N-1}} [r^{N-1} (\Delta_r u)']' = \frac{(-1)^k}{k} \binom{N-1}{k-1} \frac{1}{r^{N-1}} [r^{N-k}(u')^k]' + \lambda f(r),
\end{equation}
where the radial Laplacian $\Delta_r(\cdot)=\frac{1}{r^{N-1}} [r^{N-1} (\cdot)']'$.
Now integrating with respect to $r$, applying the boundary condition $u'(0)=0$
(the other boundary conditions for the Dirichlet problem are $u(1)=u'(1)=0$),
and substituting $v=u'$ we arrive at
\begin{equation}\nonumber
v'' + \frac{N-1}{r} v' - \frac{N-1}{r^2} v = \frac{(-1)^k}{k} \binom{N-1}{k-1} \frac{v^k}{r^{k-1}}
+ \frac{\lambda}{r^{N-1}} \int_0^{r} f(s) s^{N-1} ds,
\end{equation}
subject to the boundary conditions $v(0)=v(1)=0$.
The change of variables $w(t)=-v(e^{-t})$ leads to the boundary value problem
\begin{equation}
\nonumber
\left\{ \begin{array}{rcl}
-w'' + (N-2)w' + (N-1)w &=& \frac{1}{k} \binom{N-1}{k-1} e^{(k-3)t} w^k \\
& & + \lambda e^{(N-3)t} \int_0^{e^{-t}} f(s) s^{N-1} ds, \\
w(0)=w(+\infty) &=& 0,
\end{array}\right.
\end{equation}
where $k, N \in \mathbb{N}$, $2 \le k \le N$, and $t \in [0,+\infty[$.
This problem can be restated as
\begin{equation}
\label{dirichletr}
\left\{ \begin{array}{rcl}
-w'' + (N-2)w' + (N-1)w &=& \frac{1}{k} \binom{N-1}{k-1} e^{(k-3)t} w^k \\
& & + \lambda e^{(N-3)t} \int_0^{e^{-t}} g(s) \, ds, \\
w(0)=w(+\infty) &=& 0,
\end{array}\right.
\end{equation}
where $g(\cdot) \in L^1([0,1])$.

The problem corresponding to Navier boundary conditions in the radial setting reads
\begin{equation}
\label{navierr}
\left\{ \begin{array}{rcl}
-w'' + (N-2)w' + (N-1)w &=& \frac{1}{k} \binom{N-1}{k-1} e^{(k-3)t} w^k \\
& & + \lambda e^{(N-3)t} \int_0^{e^{-t}} g(s) \, ds, \\
w'(0) - (N-1) w(0) = w(+\infty) &=& 0.
\end{array}\right.
\end{equation}

In this work we will also consider equation~\eqref{rkhessian} in the whole of $\mathbb{R}^N$ subject to the ``boundary condition''
$u \to 0$ when $|x| \to \infty$. We will look for radial solutions to it, which we will call \emph{entire solutions}.
After the same changes of variables above we arrive at the problem
\begin{equation}
\label{entire}
\left\{ \begin{array}{rcl}
-w'' + (N-2)w' + (N-1)w &=& \frac{1}{k} \binom{N-1}{k-1} e^{(k-3)t} w^k \\
& & + \lambda e^{(N-3)t} \int_0^{e^{-t}} g(s) \, ds, \\
w(-\infty)=w(+\infty) &=& 0,
\end{array}\right.
\end{equation}
where in this case we need to assume $g(\cdot) \in L^1([0,\infty[)$.

We note that in~\cite{n6} we proved a series of results concerning some of these problems.
While for the precise statements we refer the reader to this article, we summarize now some of them.
We proved existence of at least one solution to problems~\eqref{dirichletr} and~\eqref{navierr} provided $k=2$, $N \in \{2,3\}$ and $|\lambda|$
is small enough. Moreover the solution is unique in a certain neighborhood of the origin.
Non-existence of solution to problems~\eqref{dirichletr} and~\eqref{navierr} was proven for $k=2$, $N \in \{2,3\}$ and $\lambda$ large enough.
Existence of at least one solution to problems~\eqref{dirichletr} and~\eqref{navierr} provided $k=2$ and $N \in \{2,3\}$
was proven for $\lambda < 0$ independently of the size of $|\lambda|$.

In this paper we address a series of related questions that equally concern the analysis of the existence/non-existence of solutions to the
mentioned boundary value problems. We divided our results into two blocks. The first one concerns the autonomous cases, that is,
$\lambda=0$ is set in the equations. This simplification allows as to prove:

\begin{itemize}
\item Non-existence of non-trivial solutions to problems~\eqref{dirichletr} and~\eqref{navierr} for $k=2$ and $N \ge 4$.
\item Existence of an explicit continuum of non-trivial solutions to problem~\eqref{entire} for $k=2$ and $N = 4$.
\item Existence of a continuum of non-trivial entire solutions for $k=2$ and $N \ge 5$.
\item Non-existence of non-trivial solutions to problem~\eqref{entire} for $k=2$ and $N=2,3$.
\end{itemize}

We extend and complement these results for the cases with $\lambda \neq 0$ in the following sense:
\begin{itemize}
\item We prove existence of isolated solutions for small $|\lambda|$ to problems~\eqref{dirichletr}, \eqref{navierr},
and~\eqref{entire} for $k=2$ and $N \ge 2$.
\item Non-existence of solution to problems~\eqref{dirichletr}, \eqref{navierr}, and~\eqref{entire} for $k=2$ and $N \ge 2$
is proven for $\lambda$ large enough.
\item On the other hand, existence of at least one solution to these problems in this same range of parameters is shown for $\lambda <0$
despite the possible large values of $|\lambda|$.
\end{itemize}

As a consequence, all these results illustrate in what sense $N=4$ is the critical dimension for the radial biharmonic $2-$Hessian problem.

\section{Autonomous formulation and main results for $\lambda=0$}\label{lambda0}

In this section we focus on existence and non-existence results in the case $k=2$ and $\lambda=0$.
We start formulating the problems under consideration in a favorable setting.

By introducing the change of variables
\begin{equation}
\label{change}
w=e^{\gamma t}z
\end{equation}
with $\gamma=\frac{3-k}{k-1}$, the equation
$$
-w'' + (N-2)w' + (N-1)w =\frac{1}{k} \binom{N-1}{k-1} e^{(k-3)t}w^k
$$
is transformed into
\begin{equation}\label{eq-z}
-z''+(N-2-2\gamma)z'+[N-1+\gamma(N-2)-\gamma^2]z=\alpha_{k,N} z^k,
\end{equation}
where $\alpha_{k,N}=\frac{1}{k} \binom{N-1}{k-1}$. By the
autonomous character of equation~\eqref{eq-z}, the different boundary
value problems can be studied by a suitable analysis of the phase
plane.

\subsection{Dirichlet problem}

When $k=2$ the Dirichlet problem associated to eq. \eqref{eq-z} reads
\begin{equation}
\label{dirichletr-2-0} \left\{ \begin{array}{rcl}
-z'' + (N-4)z' + (2N-4)z &=& \alpha_{2,N} z^2 \\
z(0)=z(+\infty) &=& 0.
\end{array}\right.
\end{equation}

A trivial consequence of the change introduced by \eqref{change} is as follows.

\begin{lemma}\label{lem1}
If $w$ is a non-trivial solution of \eqref{dirichletr} with $k=2$,
then $z=e^{- t}w$ is a non-trivial solution of
\eqref{dirichletr-2-0}.
\end{lemma}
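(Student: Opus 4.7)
\smallskip

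\noindent\textbf{Proof plan.} The lemma is a direct consequence of the change of variables \eqref{change} specialized to $k=2$. For $k=2$ the exponent is $\gamma = (3-k)/(k-1) = 1$, so the relation \eqref{change} reads $w = e^{t} z$, or equivalently $z = e^{-t} w$. The work is essentially computational, and the plan is to (i) substitute into the differential equation and match coefficients, (ii) verify the boundary conditions, and (iii) check that non-triviality is preserved.

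For step (i), I would compute $w' = e^{t}(z + z')$ and $w'' = e^{t}(z + 2z' + z'')$, so that
\begin{equation*}
-w'' + (N-2) w' + (N-1) w = e^{t}\bigl[ -z'' + (N-4) z' + (2N-4) z \bigr].
\end{equation*}
On the right-hand side of \eqref{dirichletr} with $k=2$ and $\lambda=0$, the forcing becomes
\begin{equation*}
\tfrac{1}{2}\tbinom{N-1}{1} e^{-t} w^2 = \alpha_{2,N}\, e^{-t}\, e^{2t} z^2 = \alpha_{2,N}\, e^{t} z^2.
\end{equation*}
Dividing through by the common factor $e^{t}$ yields precisely the equation in \eqref{dirichletr-2-0}.

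For step (ii), the boundary conditions translate immediately: $z(0) = e^{0} w(0) = w(0) = 0$, and since $w(t) \to 0$ as $t \to +\infty$, we have $z(t) = e^{-t} w(t) \to 0$ as well. Finally, for step (iii), the transformation $z = e^{-t} w$ is a pointwise multiplication by a nowhere-vanishing factor, so $z \equiv 0$ on $[0,+\infty)$ if and only if $w \equiv 0$; hence $w$ non-trivial implies $z$ non-trivial.

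No genuine obstacle is expected: the assertion is a bookkeeping verification that the substitution $\gamma=1$ turns the non-autonomous equation in \eqref{dirichletr} into the autonomous equation in \eqref{dirichletr-2-0} and respects the vanishing conditions at $t=0$ and $t=+\infty$.
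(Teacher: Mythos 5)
Your proposal is correct and is exactly the computation the paper has in mind: the paper states Lemma~\ref{lem1} as a ``trivial consequence'' of the change of variables \eqref{change} with $\gamma=1$, which is precisely the substitution $w=e^{t}z$ you carry out, and your verification of the transformed equation, the boundary conditions, and the preservation of non-triviality is accurate.
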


\begin{remark}
Note that the reciprocal statement to that of Lemma~\ref{lem1} is not true, i.~e. a nontrivial solution to equation~\eqref{dirichletr-2-0}
does not necessarily give rise to a solution of~\eqref{dirichletr}. In particular, the boundary conditions could not be obeyed if
$z(t)$ does not decay fast enough when $t \to \infty$.
\end{remark}

\begin{theorem}\label{th-k=2-Dir}
Let us assume that $k=2$, $\lambda=0$ and $N\geq 4$. Then, the
unique solution of problem \eqref{dirichletr} is the trivial one.
\end{theorem}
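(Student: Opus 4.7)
The plan is to apply Lemma~\ref{lem1} to reduce the statement to showing that the autonomous problem~\eqref{dirichletr-2-0} admits only the trivial solution, and then exploit a Hamiltonian-type identity. A non-trivial $w$ solving~\eqref{dirichletr} with $k=2$, $\lambda=0$ would give a non-trivial $z=e^{-t}w$ solving~\eqref{dirichletr-2-0} with $z(0)=z(+\infty)=0$; ruling out such a $z$ is the goal.

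Multiplying the equation $-z''+(N-4)z'+(2N-4)z=\alpha_{2,N}z^2$ by $z'$ suggests the natural energy
\[
E(t) := \frac{(z'(t))^2}{2} - (N-2)\,z(t)^2 + \frac{\alpha_{2,N}}{3}\,z(t)^3,
\]
for which a short calculation gives $E'(t) = (N-4)(z'(t))^2 \geq 0$ precisely because $N\geq 4$. Thus $E$ is non-decreasing on $[0,+\infty)$, and $z(0)=0$ yields $E(0)=\tfrac{1}{2}(z'(0))^2\geq 0$. To close the argument I would show that $E(+\infty)=0$: since $z(+\infty)=0$, the quadratic and cubic terms of $E$ vanish at infinity, so $(z'(t))^2$ converges to $2\,E(+\infty)\in[0,+\infty]$; a strictly positive (or infinite) limit would force $z'$ to be bounded away from $0$ with eventually constant sign, hence $|z(t)|\to\infty$, contradicting $z(+\infty)=0$. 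Therefore $E(+\infty)=0$.

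Combining $0\leq E(0)\leq E(+\infty)=0$ with monotonicity yields $E\equiv 0$ on $[0,+\infty)$ and in particular $z'(0)=0$. For $N\geq 5$, the relation $E'(t)=(N-4)(z'(t))^2\equiv 0$ forces $z'\equiv 0$, hence $z\equiv 0$; for $N=4$, $E'\equiv 0$ is automatic, so one instead invokes uniqueness for the Cauchy problem of~\eqref{dirichletr-2-0} with initial data $z(0)=z'(0)=0$ to conclude $z\equiv 0$. The step I expect to require the most care is the rigorous justification that $z'(+\infty)=0$, since the boundary condition only prescribes the value of $z$ at infinity; the monotonicity of $E$ combined with the asymptotic vanishing of the polynomial terms in $z$ bridges this gap cleanly, and is what makes the dichotomy $N=4$ versus $N\geq 5$ fall out naturally from a single energy.
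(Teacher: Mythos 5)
Your argument is correct, and for $N>4$ it follows a genuinely different route from the paper. Both proofs begin by invoking Lemma~\ref{lem1} to reduce to the autonomous problem~\eqref{dirichletr-2-0}, and for $N=4$ both rest on the conserved energy $V(z,z')=\tfrac{z'^2}{2}-2z^2+\tfrac12 z^3$ (your $E$ specializes to this, since $\alpha_{2,4}/3=1/2$); in fact you justify more carefully than the paper does why an orbit with $z(+\infty)=0$ must lie on the zero energy level, namely by showing $(z')^2\to 2E(+\infty)$ and excluding a positive limit via the divergence of $z$. For $N>4$ the paper abandons the energy entirely and runs a phase-plane argument on system~\eqref{planar-syst}: the origin is a saddle, a nontrivial solution would make the stable manifold cut the vertical axis and enclose a positively invariant compact set, and Poincar\'e--Bendixson plus the absence of closed orbits and the instability of the second equilibrium yield a contradiction. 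Your observation that the same $E$ satisfies $E'=(N-4)(z')^2\ge 0$ (a quick check against $z''=(N-4)z'+(2N-4)z-\alpha_{2,N}z^2$ confirms this) turns $E$ into a monotone Lyapunov-type quantity, and the chain $0\le E(0)\le E(+\infty)=0$ kills the solution directly, with the $N=4$ case handled by Cauchy uniqueness at $z(0)=z'(0)=0$. This buys a unified, more elementary treatment of all $N\ge4$ that avoids any invariant-set or limit-set machinery; the paper's phase-plane analysis, on the other hand, yields structural information (location of the stable manifold, nature of the equilibria) that is reused later for the entire-solution results. One small point worth making explicit in your write-up: the limit $E(+\infty)$ exists in $(-\infty,+\infty]$ precisely because $E$ is non-decreasing, which is what legitimizes speaking of the limit of $(z')^2$ before you know it is finite.
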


\begin{proof}
In view of Lemma \ref{lem1}, we only have to prove that the unique
solution of problem \eqref{dirichletr-2-0} is the trivial one.

If $N=4$, the equation is
$$
-z'' + 4z = \frac{3}{2}z^2,
$$
that has the conserved quantity
$$
V(z,z')=\frac{z'^2}{2}-2z^2+\frac{1}{2}z^3.
$$
Orbits such that $z(+\infty)=0$ belong to the energy level $V(z,z')=0$. Then, if $z(0)=0$ one gets $z'(0)=0$ and the solution is the trivial one.

In the case $N>4$, there is not a conserved quantity anymore, but it is still possible to perform a basic analysis of the phase plane to discard the existence of non-trivial solutions. Taking $y=z'$, the equation is equivalent to the planar system
\begin{equation}
\label{planar-syst}
\left\{\begin{array}{rcl}
z' & = & y\\
y' & = & (N-4)y + (2N-4)z - \alpha_{2,N} z^2.
\end{array}\right.
\end{equation}
This system has two equilibria, the origin and the point
$$
\gamma_1=\left(\frac{2N-4}{\alpha_{2,N}},0\right). $$ The origin is a saddle
point, hence an orbit such that $z(+\infty)=0$ must belong to the
stable manifold of codimension one. A basic local analysis shows
that the stable manifold approach the origin on the direction
$(1,-2)$, which is the eigenvector associated to the negative
eigenvalue of the linearized system. An eventual non-trivial
solution of problem \eqref{dirichletr} would mean that the stable
manifold cuts the vertical axis. Then, such piece of stable
manifold and the corresponding segment of vertical axis would
define a positively invariant compact set $C$ (see Fig. \ref{graf1}). Therefore, any
orbit starting on $C$ has non-empty $\omega$-limit set, but due to
the dissipation, system \eqref{planar-syst} does not have closed
orbits (or closed paths composed by orbits). Then, the only
candidate to be the $\omega$-limit set is the point $\gamma_1$,
but it is easy to verify that $\gamma_1$ is an unstable node. The
contradiction comes from assuming the existence of a non-trivial
solution of problem \eqref{dirichletr}.
\end{proof}

% For figures use
%
\begin{figure}
% Use the relevant command for your figure-insertion program
% to insert the figure file.
% For example, with the option graphics use
\includegraphics[scale=0.5]{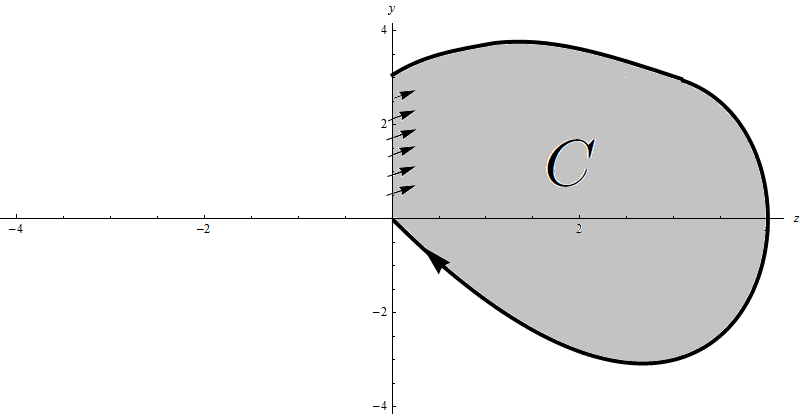}
%
% If not, use
%\picplace{5cm}{2cm} % Give the correct figure height and width in cm
%
\caption{The compact set $C$.}
\label{graf1}       % Give a unique label
\end{figure}

\subsection{Navier problem}

When $k=2$ the Navier problem reads
\begin{equation}
\label{navierr-2-0} \left\{ \begin{array}{rcl}
-z'' + (N-4)z' + (2N-4)z &=& \alpha_{2,N} z^2 \\
z'(0)-(N-2)z(0)=z(+\infty) &=& 0.
\end{array}\right.
\end{equation}
In this case we find analogous results to those proven in the previous section.

\begin{theorem}\label{th-k=2-Na}
Let us assume that $k=2$, $\lambda=0$ and $N\geq 4$. Then, the
unique solution of problem \eqref{navierr} is the trivial one.
\end{theorem}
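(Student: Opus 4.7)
My plan is to parallel the proof of Theorem~\ref{th-k=2-Dir}, replacing the role of the vertical axis in the phase plane by the line prescribed by the Navier boundary condition. I would first record the analogue of Lemma~\ref{lem1}: substituting $z=e^{-t}w$ gives $w(0)=z(0)$ and $w'(0)=z(0)+z'(0)$, so $w'(0)-(N-1)w(0)=z'(0)-(N-2)z(0)$. Hence any non-trivial solution of \eqref{navierr} yields a non-trivial solution of \eqref{navierr-2-0}, and it suffices to rule out non-trivial solutions of the latter.

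When $N=4$ the conserved quantity $V(z,z')=\frac{(z')^2}{2}-2z^2+\frac{1}{2}z^3$ from Theorem~\ref{th-k=2-Dir} again constrains decaying orbits to the level set $\{V=0\}$. Evaluating on the Navier datum $z'(0)=2z(0)$ gives $V(z(0),2z(0))=\frac{1}{2}z(0)^3$, which vanishes only when $z(0)=0$, forcing the trivial solution.

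When $N>4$ I would run a phase-plane argument mirroring Theorem~\ref{th-k=2-Dir}. Writing $y=z'$ and $V(z,y):=y-(N-2)z$, the Navier condition places the initial datum on the line $L:=\{V=0\}$, while any decaying solution must lie on the stable manifold $W^s$ of the saddle at the origin. The linearization has eigenvalues $-2$ and $N-2$ with eigenvectors $(1,-2)$ and $(1,N-2)$, so near the origin the branch of $W^s$ into $\{z>0\}$ sits strictly in $\{V<0\}$ while the opposite branch sits in $\{V>0\}$. A direct computation along \eqref{planar-syst} gives
\begin{equation}\nonumber
\dot V=-2V-\alpha_{2,N}z^2,
\end{equation}
so on $L$ one has $\dot V\le 0$ with strict inequality whenever $z\ne 0$, meaning the flow crosses $L$ into $\{V<0\}$.

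Assuming for contradiction that a non-trivial solution exists, $W^s$ must meet $L$ at some $P\ne 0$. In the case $z_P>0$ the arc of the corresponding branch of $W^s$ together with the segment of $L$ from $P$ back to the origin bounds a compact set $C\subset\{V\le 0\}$ that is positively invariant by the sign of $\dot V$ on $L$. The divergence of \eqref{planar-syst} is $N-4>0$, so Bendixson--Dulac rules out closed orbits, and Poincar\'e--Bendixson then forces any orbit in the interior of $C$ to accumulate on an equilibrium; the only candidates are the origin (a saddle, reached only through the one-dimensional $W^s$) and $\gamma_1$ (an unstable node, attracting nothing), yielding the contradiction exactly as in Theorem~\ref{th-k=2-Dir}. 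The main obstacle I anticipate is the asymmetry of the system: the case $z_P<0$ produces a region lying in $\{V\ge 0\}$ that is only \emph{negatively} invariant, so the Bendixson--Dulac and Poincar\'e--Bendixson machinery must be rerun in backward time, with the reversed divergence sign still of constant sign and with $\gamma_1$ lying outside the region in question.
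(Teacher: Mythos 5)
Your proposal is correct and follows essentially the same route as the paper: the reduction via $z=e^{-t}w$ to the autonomous problem, the conserved quantity $V(z,y)=\frac{y^2}{2}-2z^2+\frac12 z^3$ restricted to the line $y=2z$ for $N=4$, and the saddle/stable-manifold plus invariant-region argument with the dissipation $N-4>0$ excluding closed orbits for $N>4$. The only difference is that you carefully work out the details (the identity $\dot V=-2V-\alpha_{2,N}z^2$ on the Navier line and the backward-time case for $z_P<0$) that the paper compresses into ``the proof is analogous to that of Theorem \ref{th-k=2-Dir}.''
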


\begin{proof}
As in Theorem \ref{th-k=2-Dir}, we only have to prove that the
unique solution of problem \eqref{navierr-2-0} is the trivial one.

For $N=4$, we can use again an argument based on the conservation
of energy. Looking at the planar system \eqref{planar-syst}, the
quantity
$$
V(z,y)=\frac{y^2}{2}-2z^2+\frac{1}{2}z^3
$$
is constant along orbits. An eventual solution of problem
\eqref{navierr} would mean that the line $y-2z=0$ intersects the
curve $V(z,y)=0$ at a point different to the origin. It is simple
to see that this is not the case.

In the case $N>4$, the proof is analogous to that of Theorem
\ref{th-k=2-Dir}.
\end{proof}

\begin{remark}
A simple re-scaling of the spatial variable shows that Theorems
\ref{th-k=2-Dir} and \ref{th-k=2-Na} are valid not only for the
Dirichlet and Navier problems posed on the unit ball, but on
\emph{any} ball.
\end{remark}

\subsection{Entire solutions}
\label{entiresol}

In this subsection, we consider problem \eqref{entire} with $k=2$ and $\lambda=0$ , that is
\begin{equation}
\label{entire-0}
\begin{array}{rcl}
-w'' + (N-2)w' + (N-1)w &=& \frac{N-1}{2}  e^{-t} w^2 \\
w(-\infty)=w(+\infty) &=& 0.
\end{array}
\end{equation}
Introducing the change
$$
w=e^t z,
$$
the equation becomes autonomous and reads
\begin{equation}\label{entire2}
-z'' + (N-4)z' + (2N-4)z =\frac{N-1}{2}  z^2,
\end{equation}
with boundary conditions
\begin{equation}\label{entire2-bc}
\lim_{t\to-\infty} e^t z(t)=0=\lim_{t\to+\infty} e^t z(t).
\end{equation}

A first observation is that our previous results for the Dirichlet and Navier problems, valid for any ball, cannot be  extended to the whole of $\mathbb{R}^N$.
In particular, our first task is to find an explicit entire solution when $N=4$.

For $N=4$, equation \eqref{entire2} reads
\begin{equation}\label{entire2-4}
-z'' + 4z = \frac{3}{2}z^2,
\end{equation}
that has the following family of exact solutions
$$
z(t)= \frac{16 \, e^{2(t-t_0)}}{\left[1+e^{2(t-t_0)}\right]^2},
$$
where $t_0 \in \mathbb{R}$ is arbitrary. This family of solutions corresponds to the unique homoclinic orbit of the phase plane (see Fig. \ref{graf2}), of course they hold the conditions  $z(-\infty)=z(+\infty)=0$, but it is easy to verify that conditions \eqref{entire2-bc} hold as well. In terms of our original variables we find the explicit solution
\begin{equation}\label{esol}
u(r)=\frac{8}{1+\alpha r^2},
\end{equation}
for any $\alpha \in \mathbb{R}^+$. So we have found a continuum of classical solutions to our partial differential equation. For $\alpha<0$, the functions given by formula~\eqref{esol} present two singularities at $r=\pm\sqrt{-\alpha}$, but such a family can still be seen as
a collection of weak solutions, more concretely belonging to the Lorentz space $L^{1,\infty}(\mathbb{R}_+)$.

% For figures use
%
\begin{figure}[t]
% Use the relevant command for your figure-insertion program
% to insert the figure file.
% For example, with the option graphics use
\includegraphics[scale=0.4]{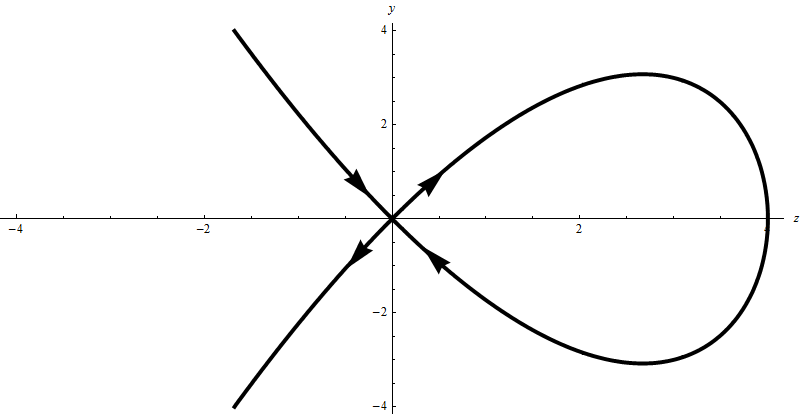}
%
% If not, use
%\picplace{5cm}{2cm}
% Give the correct figure height and width in cm
%
\caption{The homoclinic orbit of eq. \eqref{entire2-4}.}
\label{graf2}       % Give a unique label
\end{figure}

In the next result, we analyze the case $N>4$. In this case, we cannot give an explicit solution in closed form, but entire solutions still exist.

\begin{theorem}
For any $N>4,$ problem~\eqref{entire2}-\eqref{entire2-bc} has a continuum of non-trivial positive solutions.
\end{theorem}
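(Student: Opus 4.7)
The plan is to produce a heteroclinic orbit from $\gamma_1$ to the origin in the planar system associated with \eqref{entire2}, and then exploit the autonomy of the equation to generate a continuum of time-translates. Setting $y=z'$, one obtains
\begin{equation*}
z' = y, \qquad y' = (N-4)\,y + (2N-4)\,z - \frac{N-1}{2}\,z^{2},
\end{equation*}
with equilibria $(0,0)$ and $\gamma_{1}=\left(\frac{4(N-2)}{N-1},0\right)$. The Jacobian at the origin has eigenvalues $N-2$ and $-2$ with eigenvectors $(1,N-2)$ and $(1,-2)$, so the origin remains a hyperbolic saddle. At $\gamma_{1}$ the trace equals $N-4>0$ and the determinant equals $2N-4>0$, so $\gamma_{1}$ is a source (an unstable focus when $4<N<8+4\sqrt{2}$ and an unstable node otherwise).

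The observation that drives the proof is that a heteroclinic orbit from $\gamma_{1}$ to $(0,0)$ already satisfies \eqref{entire2-bc}: as $t\to -\infty$, $z(t)\to \frac{4(N-2)}{N-1}$, so $e^{t}z(t)\to 0$; as $t\to +\infty$, an approach along the stable eigenvector gives $z(t)=O(e^{-2t})$, so once more $e^{t}z(t)\to 0$. I therefore focus on the branch of $W^{s}(0)$ tangent to $(1,-2)$ that enters the origin through the fourth quadrant and trace it backward in time, aiming to show that its $\alpha$-limit is $\{\gamma_{1}\}$.

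The argument proceeds by first constructing a bounded region $D\subset\{z>0\}$ containing $\gamma_{1}$, bounded by pieces of the isoclines $\{y=0\}$ and $\{y'=0\}$ together with a segment of a vertical line $\{z=Z\}$ with $Z$ chosen large enough that $y'<0$ along it, so that the backward vector field points inward everywhere on $\partial D$. Since the backward trajectory of $W^{s}(0)$ leaves the origin tangent to $(1,-2)$ and enters $D$, it remains trapped in $D$ for all backward times. Poincaré--Bendixson then applies. Bendixson's criterion rules out periodic orbits, because $\operatorname{div}(F)=N-4>0$ has constant sign. A homoclinic loop at the origin is excluded by Liouville's formula: the interior of such a loop would be bounded and forward invariant, yet the area of any open subset inside would grow as $e^{(N-4)t}$, contradicting the bound on the interior. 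The $\alpha$-limit set can therefore only be the remaining equilibrium $\gamma_{1}$, yielding a heteroclinic orbit $z_{*}(t)>0$. Finally, by the autonomy of \eqref{entire2}, the one-parameter family $\{z_{*}(\cdot-t_{0})\}_{t_{0}\in\mathbb{R}}$ provides the required continuum of non-trivial positive entire solutions.

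The main technical difficulty I expect lies in producing the trapping region $D$ and checking the sign of the vector field on each of its pieces, in particular across the transition from focus to node behavior of $\gamma_{1}$ as $N$ varies, together with verifying positivity of $z_{*}$ when the connection spirals around $\gamma_{1}$. Once this phase-plane bookkeeping is carried out, the Poincaré--Bendixson and Liouville arguments make the rest of the proof routine.
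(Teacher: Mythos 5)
Your proof takes essentially the same route as the paper: identify the $z>0$ branch of the stable manifold of the saddle at the origin as a heteroclinic connection with the source $\gamma_1$, observe that the $e^{-2t}$ decay at $+\infty$ and the convergence to $\gamma_1$ at $-\infty$ give \eqref{entire2-bc}, and produce the continuum by time translation (the paper merely asserts the connection via ``basic arguments'', and you correctly refine its claim that $\gamma_1$ is a node, since it is in fact an unstable focus for $4<N<8+4\sqrt{2}$). The one step you flag as delicate --- confining the backward orbit to a compact subset of $\{z>0\}$ --- closes cleanly by noting that $E(z,y)=\tfrac{y^2}{2}-(N-2)z^2+\tfrac{N-1}{6}z^3$ satisfies $\dot E=(N-4)y^2\ge 0$ along orbits, so the stable-manifold orbit lies in $\{E\le 0\}$, and the component of $\{E\le 0\}\setminus\{(0,0)\}$ that it occupies (the one it meets while approaching the origin from $z>0$) has compact closure contained in $\{z\ge 0\}$, after which your Poincar\'e--Bendixson and divergence arguments apply verbatim.
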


\begin{proof}
The equivalent planar system to equation \eqref{entire2} (see system \eqref{planar-syst}) was partially studied in the proof of Theorem \ref{th-k=2-Dir}. We know that the equilibrium $(0,0)$ is a saddle point and the second equilibrium $\gamma_1=\left(\frac{2N-4}{\alpha_{2,N}},0\right)$ is an unstable node. In fact, basic arguments show that the stable manifold of $(0,0)$ is a heteroclinic connection with $\gamma_1$ and belong to the semiplane $z>0$ (see Fig. \ref{graf3}). This means that there is a solution $z(t)$ such that
$$
z(-\infty)=\frac{2N-4}{\alpha_{2,N}},\quad z(+\infty)=0.
$$
More precisely, the rate of convergence to $0$ is as $e^{-2t}$ because $\lambda=-2$ is the negative eigenvalue associated to the Jacobian matrix of system \eqref{planar-syst} in $(0,0)$. Therefore, the boundary conditions \eqref{entire2-bc} hold a $z(t)$ is a non-trivial positive solution of the problem. Now, $z(t+t_0)$ for $t_0\in\mathbb{R}$ generates a whole continuum of non-trivial positive solutions.
\end{proof}

% For figures use
%
\begin{figure}[t]
% Use the relevant command for your figure-insertion program
% to insert the figure file.
% For example, with the option graphics use
\includegraphics[scale=0.3]{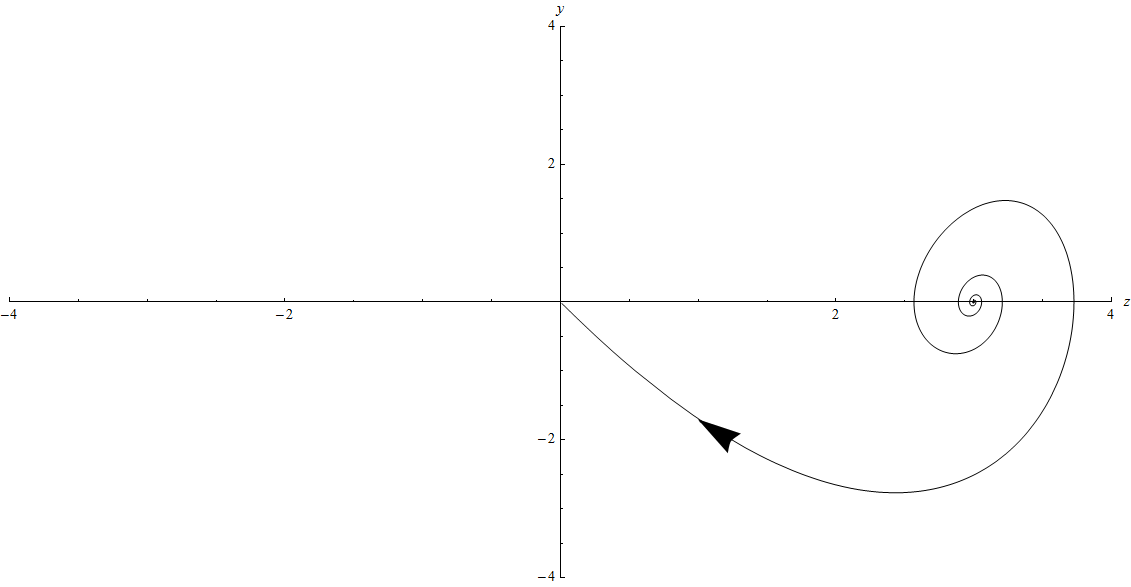}
%
% If not, use
%\picplace{5cm}{2cm}
% Give the correct figure height and width in cm
%
\caption{The stable manifold of the equilibrium $(0,0)$.}
\label{graf3}       % Give a unique label
\end{figure}

Finally, we prove a non-existence result for the cases $N=2,3$ .

\begin{theorem}
If $N=2,3$, the unique solution of problem~\eqref{entire2}-\eqref{entire2-bc} is the trivial one.
\end{theorem}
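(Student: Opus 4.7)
The plan is to combine a phase-plane analysis of the system \eqref{planar-syst} with a Lyapunov-type functional, continuing the strategy used in Theorems~\ref{th-k=2-Dir} and \ref{th-k=2-Na}. The key object is the functional
$$
E(z,z') \;=\; -\frac{(z')^{2}}{2} + (N-2)z^{2} - \frac{N-1}{6}z^{3},
$$
which along solutions of \eqref{entire2} satisfies $\dot E = (4-N)(z')^{2}\geq 0$ for $N\in\{2,3\}$, with strict inequality wherever $z'\neq 0$; in particular $E$ is strictly monotone along every non-constant orbit. Any non-trivial entire solution must obey $z(t)\to 0$ as $t\to+\infty$; since the origin is an isolated equilibrium of \eqref{planar-syst} and Bendixson's criterion (divergence $=N-4\neq 0$) rules out periodic orbits, Poincar\'e--Bendixson forces $(z(t),z'(t))\to(0,0)$, so $E(+\infty)=0$ and consequently $E(t)\leq 0$ along the whole orbit.

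I would then argue by contradiction that such an orbit cannot be extended to all $t\in\mathbb{R}$. If the backward orbit is bounded, its $\alpha$-limit set must be an equilibrium (Poincar\'e--Bendixson plus the absence of periodic orbits). The origin is ruled out because the resulting homoclinic would give $E(-\infty)=E(+\infty)=0$, forcing $z'\equiv 0$ by the strict monotonicity of $E$; the other equilibrium $\gamma_{1}=((2N-4)/\alpha_{2,N},0)$, which for $N=2$ coincides with the origin and for $N=3$ equals $(2,0)$, is shown by a direct Jacobian computation to be asymptotically stable (a stable spiral when $N=3$), hence cannot be the $\alpha$-limit of any non-constant orbit.

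If instead the backward orbit is unbounded, then since $z$ bounded together with $|z'|\to\infty$ is excluded by the ODE $z'=y$, one must have $|z(t)|\to\infty$. In the regime $z(t)<0$ with $|z|$ large, the inequality $E(t)\leq 0$ yields
$$
(z'(t))^{2} \;\geq\; 2(N-2)z(t)^{2} - \frac{N-1}{3}z(t)^{3} \;\geq\; \frac{N-1}{3}|z(t)|^{3} + O(z(t)^{2}),
$$
so $|z'|\gtrsim|z|^{3/2}$, and a standard comparison with the separable ODE $u'=c\,u^{3/2}$ forces blow-up of $|z|$ at a finite backward time, contradicting the assumption of an entire solution.

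The main obstacle is to exclude the remaining scenario $z(t)\to+\infty$ as $t\to-\infty$. Here I would combine two observations: in the large-$z$ regime the equation reduces asymptotically to $z''\approx-\tfrac{N-1}{2}z^{2}<0$, and a concavity/tangent-line argument prevents finite-time blow-up toward $+\infty$ (any positive linear upper bound is inherited on a left-infinite interval of concavity and is incompatible either with $z\to+\infty$ or with $z(+\infty)=0$); simultaneously, if $E(-\infty)$ were finite then $y^{2}\geq 0$ together with $E\leq 0$ would force $z$ bounded above, while if $E(-\infty)=-\infty$ an elementary matching of orders (polynomial, exponential, or logarithmic ans\"atze) shows no self-consistent rate for $z(t)\to+\infty$ is compatible with the quadratic nonlinearity and the boundary condition $e^{t}z(t)\to 0$. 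With this case closed, every possibility leads to a contradiction, and we conclude $z\equiv 0$.
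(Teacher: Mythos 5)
Your strategy is genuinely different from the paper's and much of it is sound: the monotone functional $E$ with $\dot E=(4-N)(z')^2$ is correctly computed, the forward convergence $(z,z')\to(0,0)$ and the bound $E\le 0$ follow, the homoclinic case is cleanly killed by strict monotonicity of $E$, the equilibrium $\gamma_1=(2,0)$ for $N=3$ is indeed a stable spiral (trace $-1$, determinant $2$) and so cannot be an $\alpha$-limit, and the comparison $|z'|\ge c|z|^{3/2}$ forcing finite backward blow-up when $z\to-\infty$ is correct. For reference, the paper takes a different route: it observes that the second quadrant is negatively invariant, so the stable manifold of the origin consists of backward-unbounded orbits, and then disposes of all of them at once by applying the blow-up theorem \cite[Theorem 1.1]{LH} to the reflected equation obtained from $z(t)=-x(-t)$; your energy functional replaces that citation with self-contained arguments in most, but not all, cases.

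The genuine gap is the one you flag yourself: backward orbits along which $z$ is unbounded \emph{above}. Two steps are missing. First, the reduction of ``backward orbit unbounded'' to the two alternatives $z\to-\infty$ or $z\to+\infty$ is asserted rather than proved; ``excluded by the ODE $z'=y$'' is not an argument (the correct reason that $z$ bounded forces the whole orbit bounded is that $E(t)\ge E(-\infty)$ plus $z$ bounded controls $y$, and one must also rule out oscillation with unbounded amplitude). Second, and decisively, the exclusion of $z\to+\infty$ rests on ``an elementary matching of orders (polynomial, exponential, or logarithmic ans\"atze)'', which is a heuristic, not a proof: since the boundary condition at $-\infty$ is only $e^tz(t)\to 0$, growth up to $o(e^{|t|})$ is a priori admissible, and this is exactly the branch of the stable manifold entering the fourth quadrant, i.e.\ the case the paper closes by citing Li--Huang. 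Note also that $E\le 0$ gives no information for $z$ large and positive (the cubic term makes the right-hand side of your inequality negative there), so the functional that carries the rest of your proof is useless in precisely this regime. The case can be closed rigorously --- e.g.\ writing $u(s)=z(-s)$, so that $\ddot u=(4-N)\dot u+(2N-4)u-\tfrac{N-1}{2}u^2$, and splitting according to whether $\dot u\ge\varepsilon u^2$ ever holds for $u$ large (finite-time blow-up by separation of variables) or never holds (then $\ddot u\le-\delta u^2$, so $\dot u$ is decreasing yet must stay nonnegative for $u\to\infty$, hence bounded, which forces $\ddot u\to-\infty$ and a contradiction) --- or simply by invoking \cite[Theorem 1.1]{LH} as the paper does; but as written this case is not proved, so the argument is incomplete.
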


\begin{proof}
If $N=3$, there is still an orbit connecting $(0,0)$ and $\gamma_1$, but now it tends to $\gamma_1$ as $t\to+\infty$, hence it does not verify the boundary conditions  \eqref{entire2-bc}. If fact, any solution belonging to the stable manifold of $(0,0)$ is unbounded (the second quadrant is negatively invariant, see \ref{graf4}). Besides, by introducing the change of variables $z(t)=-x(-t)$ into equation  \eqref{entire2}, we get an equivalent equation satisfying the hypotheses of \cite[Theorem 1.1]{LH}. Putting these facts together, any solution $z(t)$ such that $z(+\infty)=0$ is not defined on the whole real line and of course can not verify  \eqref{entire2-bc}. The case $N=2$ is handled by similar arguments.

% For figures use
%
\begin{figure}[t]
% Use the relevant command for your figure-insertion program
% to insert the figure file.
% For example, with the option graphics use
\includegraphics[scale=0.4]{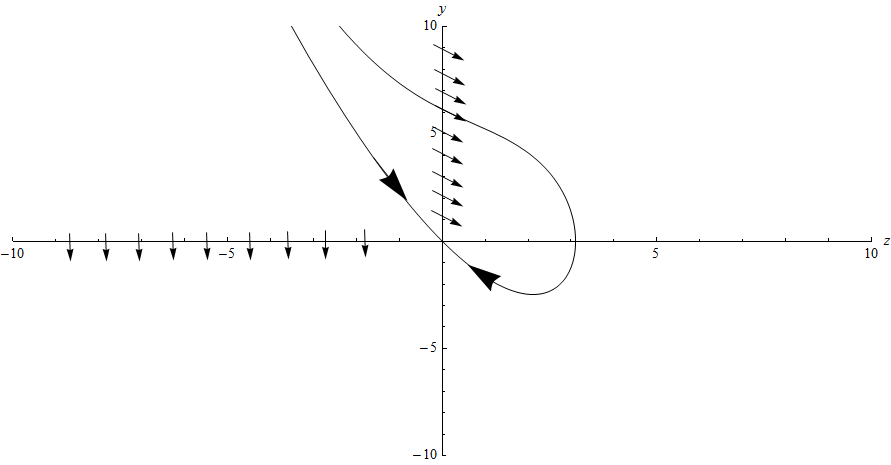}
%
% If not, use
%\picplace{5cm}{2cm}
% Give the correct figure height and width in cm
%
\caption{The stable manifold of the equilibrium $(0,0)$ for $N=3$.}
\label{graf4}       % Give a unique label
\end{figure}

\end{proof}

\section{The case $\lambda \neq 0$}\label{lneq0}

This section is devoted to study the problems under consideration with $\lambda \neq 0$ but under different assumptions on the value of this parameter.
Moving to the same framework as in the previous section we find
\begin{eqnarray}\nonumber
& & -z''+(N-2-2\gamma)z'+[N-1+\gamma(N-2)-\gamma^2]z \\ \nonumber
&=& \alpha_{k,N} z^k + \lambda e^{(N-3-\gamma)t} \int_0^{e^{-t}} g(s) \, ds.
\end{eqnarray}
When $k=2$ this equation simplifies to
$$
-z'' + (N-4)z' + (2N-4)z = \alpha_{2,N} z^2 + \lambda e^{(N-4)t} \int_0^{e^{-t}} g(s) \, ds.
$$

\subsection{Dirichlet and Navier cases}

In these cases we will prove the existence of a branch of solutions departing from $(z,\lambda)=(0,0)$ under the following assumption on the datum:

\begin{assumption}\label{hypo1}
We assume $g \in L^1 \left([0,1]\right)$ is such that
$$
\lim_{t \to \infty} e^{(N-3)t} \int_0^{e^{-t}} g(s) \, ds =0.
$$
\end{assumption}

\begin{definition}\label{pecbc1}
We define $\tilde{C}^m_0(\mathbb{R}_+)$, $m=0,1,2,\cdots$, to be the set of $m-$times continuously differentiable functions over $\mathbb{R}_+$ such that
$e^t h(t) \to 0$ when $t \to \infty$ for all $h \in \tilde{C}^m_0(\mathbb{R}_+)$.
\end{definition}

\begin{definition}\label{pecbc2}
We define the mapping
\begin{eqnarray}\nonumber
\mathcal{F}: \tilde{C}^2_0(\mathbb{R}_+) \times \mathbb{R} &\longrightarrow& \tilde{C}_0(\mathbb{R}_+) \\ \nonumber
(z,\lambda) &\longmapsto& -z'' + (N-4)z' + (2N-4)z \\ \nonumber
& & - \alpha_{2,N} z^2 - \lambda e^{(N-4)t} \int_0^{e^{-t}} g(s) \, ds.
\end{eqnarray}
\end{definition}

\begin{remark}
The necessity of introducing Definitions~\ref{pecbc1} and~\ref{pecbc2} comes from the peculiar boundary condition $e^t z(t) \to 0$
when $t \to \infty$.
\end{remark}

\begin{theorem}\label{imfudina}
Under hypothesis~\ref{hypo1} and for any $N \ge 2$, there exists a neighborhood $U \subset \tilde{C}^2_0(\mathbb{R}_+)$ of $0$
and an open neighborhood $I \subset \mathbb{R}$ of $0$
such that for every $\lambda \in I$ there is
a unique element $z(\lambda) \in U$ so that $\mathcal{F}[z(\lambda),\lambda]=0$. Moreover the mapping
$$
I \ni \lambda \longmapsto z(\lambda)
$$
is of class $C^1$, $z(0)=0$ and the linear map $\mathcal{F}_z[z(\lambda),\lambda]$ is bijective for any $\lambda \in I$.
\end{theorem}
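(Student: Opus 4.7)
The plan is to apply the Implicit Function Theorem in Banach spaces to $\mathcal{F}$ at the base point $(0,0)$, where trivially $\mathcal{F}(0,0)=0$. I equip $\tilde{C}^2_0(\mathbb{R}_+)$ and $\tilde{C}_0(\mathbb{R}_+)$ with natural weighted norms (so that $e^t$ times the function and its derivatives vanish at infinity), and I impose on the domain space the boundary condition at $t=0$ inherited from~\eqref{dirichletr} or~\eqref{navierr}, namely $z(0)=0$ or $z'(0)-(N-2)z(0)=0$. Both spaces thereby become Banach.

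First I verify that $\mathcal{F}$ is well defined and smooth. The affine piece $-z''+(N-4)z'+(2N-4)z$ and the quadratic $-\alpha_{2,N}z^2$ clearly lie in $\tilde{C}_0$ whenever $z\in\tilde{C}^2_0$, and the forcing term belongs to $\tilde{C}_0$ precisely by Assumption~\ref{hypo1}, since
$$
e^t\cdot e^{(N-4)t}\int_0^{e^{-t}}g(s)\,ds=e^{(N-3)t}\int_0^{e^{-t}}g(s)\,ds\xrightarrow[t\to\infty]{}0.
$$
Being polynomial in $z$ and affine in $\lambda$, $\mathcal{F}$ is of class $C^\infty$ in the Fréchet sense between these Banach spaces.

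The key step is that the linearization $\mathcal{F}_z[0,0]\colon h\mapsto -h''+(N-4)h'+(2N-4)h$ is an isomorphism. The characteristic polynomial $\mu^2-(N-4)\mu-(2N-4)$ has discriminant $N^2$ and hence roots $\mu_1=N-2$, $\mu_2=-2$. For $N\ge 2$ the mode $e^{(N-2)t}$ is excluded by the decay requirement, because $e^t\cdot e^{(N-2)t}=e^{(N-1)t}$ cannot tend to zero, whereas $e^{-2t}$ is admissible; the boundary condition at $t=0$ then pins down its coefficient, so the homogeneous BVP has only the trivial solution. For surjectivity, variation of parameters (the Wronskian of the two modes equals $-Ne^{(N-4)t}$) yields the particular solution
$$
h_p(t)=\frac{e^{(N-2)t}}{N}\int_t^\infty e^{-(N-2)s}f(s)\,ds+\frac{e^{-2t}}{N}\int_0^t e^{2s}f(s)\,ds,
$$
to which one adds the unique correction of the form $Ce^{-2t}$ that realises the boundary condition at $t=0$.

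The main obstacle is showing that $h_p$ really lies in $\tilde{C}^2_0$ with the required decay rate $e^{-t}$, which is delicate because $e^{-2t}$ itself is at the boundary of admissible decay. Given $\varepsilon>0$, pick $T$ with $|f(s)|\le\varepsilon e^{-s}$ for $s\ge T$; the first integral then contributes at most $\varepsilon/[N(N-1)]$ to $|e^t h_p(t)|$ for $t\ge T$, while the second contributes at most $\varepsilon/N$ plus an $O(e^{-t})$ tail, together forcing $e^t h_p\to 0$. Differentiating the formula for $h_p'$ and exploiting the equation $h_p''=(N-4)h_p'+(2N-4)h_p-f$ to express $h_p''$ in terms of already-controlled quantities propagates the same decay to the first and second derivatives. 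Once $\mathcal{F}_z[0,0]$ is a continuous bijection between Banach spaces its inverse is automatically bounded by the Open Mapping Theorem, and the Implicit Function Theorem supplies the $C^1$ branch $\lambda\mapsto z(\lambda)$ with $z(0)=0$ and with $\mathcal{F}_z[z(\lambda),\lambda]$ remaining invertible throughout a neighbourhood of $\lambda=0$.
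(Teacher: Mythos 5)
Your proposal is correct and follows essentially the same route as the paper: apply the implicit function theorem at $(0,0)$, the key point being that $\mathcal{F}$ is well defined and $C^1$ (thanks to Assumption~\ref{hypo1}) and that the linearization $-\frac{d^2}{dt^2}+(N-4)\frac{d}{dt}+(2N-4)$ with the Dirichlet or Navier boundary conditions is bijective. The paper merely asserts this bijectivity, whereas you substantiate it with the roots $N-2$ and $-2$, the variation-of-parameters formula and the decay estimates — a welcome filling-in of details, but not a different argument.
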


\begin{proof}
First of all note that for $g$ fulfilling Assumption~\ref{hypo1} functional $\mathcal{F}$ is well defined, and it is also of class $C^1$.
The proof follows from the observation $\mathcal{F}[0,0]=0$,
the fact that the linear map $\mathcal{F}_z[0,0]=-\frac{d^2}{dt^2}+(N-4)\frac{d}{dt}+(2N-4)$
is bijective for both sets of boundary conditions $z(0)=z(+\infty)=0$ and $z'(0)-(N-2)z(0)=z(+\infty)=0$ and any $N \ge 2$
due to the absence of the null eigenvalue in its spectrum and the application of the \emph{implicit function theorem}.
\end{proof}

\begin{remark}
The above result trivially implies the existence of an isolated classical solution for both Dirichlet and Navier problems when $|\lambda|$ is small enough.
\end{remark}

\subsection{Entire solutions}

In this case we can prove the existence of a branch of solutions departing from $(z,\lambda)=(0,0)$ under the following hypothesis:

\begin{assumption}\label{hypo2}
We assume $g \in L^1 \left([0,\infty[\right)$ is such that
$$
\lim_{t \to \pm \infty} e^{(N-3)t} \int_0^{e^{-t}} g(s) \, ds =0.
$$
\end{assumption}

\begin{definition}\label{pecbc3}
We define $\hat{C}^m_0(\mathbb{R})$, $m=0,1,2,\cdots$, to be the set of $m-$times continuously differentiable functions over $\mathbb{R}$ such that
$e^t h(t) \to 0$ when $t \to \pm \infty$ for all $h \in \hat{C}^m_0(\mathbb{R})$.
\end{definition}

\begin{definition}\label{pecbc4}
We define the mapping
\begin{eqnarray}\nonumber
\mathcal{F}_1: \hat{C}^2_0(\mathbb{R}) \times \mathbb{R} &\longrightarrow& \hat{C}_0(\mathbb{R}) \\ \nonumber
(z,\lambda) &\longmapsto& -z'' + (N-4)z' + (2N-4)z \\ \nonumber
& & - \alpha_{2,N} z^2 - \lambda e^{(N-4)t} \int_0^{e^{-t}} g(s) \, ds.
\end{eqnarray}
\end{definition}

\begin{remark}
The necessity of introducing Definitions~\ref{pecbc3} and~\ref{pecbc4} comes from the peculiar boundary condition set $e^t z(t) \to 0$
when $t \to \pm \infty$.
\end{remark}

\begin{theorem}\label{imfues}
Under hypothesis~\ref{hypo2} and for any $N \ge 2$, there exists a neighborhood $U \subset \hat{C}^2_0(\mathbb{R})$ of $0$ and an open neighborhood $I \subset \mathbb{R}$ of $0$
such that for every $\lambda \in I$ there is
a unique element $z(\lambda) \in U$ so that $\mathcal{F}[z(\lambda),\lambda]=0$. Moreover the mapping
$$
I \ni \lambda \longmapsto z(\lambda)
$$
is of class $C^1$, $z(0)=0$ and the linear map $(\mathcal{F}_1)_z[z(\lambda),\lambda]$ is bijective for any $\lambda \in I$.
\end{theorem}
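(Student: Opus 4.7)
The plan is to apply the implicit function theorem to $\mathcal{F}_1$ at $(z,\lambda)=(0,0)$, mirroring the proof of Theorem \ref{imfudina} and adapting it to the two-sided weighted decay encoded in $\hat{C}^2_0(\mathbb{R})$. First I would verify that $\mathcal{F}_1$ is a well-defined $C^1$ map from $\hat{C}^2_0(\mathbb{R})\times\mathbb{R}$ into $\hat{C}_0(\mathbb{R})$ with $\mathcal{F}_1[0,0]=0$. Assumption \ref{hypo2} places the forcing term $\lambda e^{(N-4)t}\int_0^{e^{-t}} g(s)\,ds$ in $\hat{C}_0(\mathbb{R})$; the linear differential block maps $\hat{C}^2_0(\mathbb{R})$ into $\hat{C}_0(\mathbb{R})$ by construction; and the quadratic term is well-placed because for $z\in \hat{C}^0_0(\mathbb{R})$ the function $z$ is bounded and $e^t z^2 = (e^t z)\cdot z$ tends to $0$ at $\pm\infty$.

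The decisive step is to show that the Fréchet derivative
\[
L := (\mathcal{F}_1)_z[0,0] \;=\; -\frac{d^2}{dt^2} + (N-4)\frac{d}{dt} + (2N-4)
\colon \hat{C}^2_0(\mathbb{R}) \longrightarrow \hat{C}_0(\mathbb{R})
\]
is a Banach-space isomorphism. The characteristic equation factors as $(\mu-(N-2))(\mu+2)=0$, giving the fundamental system $\{e^{(N-2)t}, e^{-2t}\}$. Injectivity is immediate: $e^t\cdot e^{(N-2)t} = e^{(N-1)t}$ does not decay at $+\infty$ and $e^t\cdot e^{-2t} = e^{-t}$ does not decay at $-\infty$, so no nontrivial combination of these exponentials belongs to $\hat{C}^0_0(\mathbb{R})$. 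For surjectivity, given $f \in \hat{C}_0(\mathbb{R})$, I would write the Green's function candidate
\[
z(t) \;=\; \frac{1}{N}\left[e^{-2t}\int_{-\infty}^{t} e^{2s} f(s)\,ds + e^{(N-2)t}\int_{t}^{+\infty} e^{-(N-2)s} f(s)\,ds\right]
\]
and check, via a routine $\varepsilon$-$T$ estimate exploiting $f(s)=o(e^{-|s|})$ at $\pm\infty$, that both integrals converge and that $e^t z(t)\to 0$ at $\pm\infty$, with analogous decay for $z'$ and $z''$.

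Once $L$ is known to be a topological isomorphism, the implicit function theorem yields the neighborhoods $U$ and $I$ and the $C^1$ curve $\lambda\mapsto z(\lambda)$ with $z(0)=0$; bijectivity of $(\mathcal{F}_1)_z[z(\lambda),\lambda]$ for every $\lambda\in I$ follows from the continuity of $(z,\lambda)\mapsto(\mathcal{F}_1)_z[z,\lambda]$ and the openness of the set of invertible bounded operators, after possibly shrinking $U$ and $I$. The main obstacle is the surjectivity of $L$, specifically the verification that the Green's function candidate obeys the demanding two-sided weighted decay $e^t z(t)\to 0$ at $\pm\infty$. A subsidiary subtlety arises in the borderline case $N=2$, where the exponent $N-2$ vanishes, the unstable homogeneous mode degenerates to a constant, and the closed-form candidate above breaks down; there one would factor $L = -D(D+2)$ and reduce the problem to two successive first-order ODEs, invoking Assumption \ref{hypo2} to supply the compatibility needed for a decaying antiderivative.
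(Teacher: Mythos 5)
Your proposal is correct and follows essentially the same route as the paper: verify that $\mathcal{F}_1$ is well defined and $C^1$ with $\mathcal{F}_1[0,0]=0$, show the linearization $-\frac{d^2}{dt^2}+(N-4)\frac{d}{dt}+(2N-4)$ is bijective on the weighted decay spaces, and invoke the implicit function theorem; you simply supply the explicit Green's-function verification that the paper compresses into the phrase ``absence of the null eigenvalue.'' One small quibble: your closed-form candidate does not actually break down at $N=2$ — the roots $N-2$ and $-2$ remain distinct, the prefactor $1/N$ is nonzero, and $\int_t^{\infty}f(s)\,ds=o(e^{-t})$ still gives $e^{t}z(t)\to 0$ at $+\infty$ — so the separate first-order factorization there is unnecessary, though harmless.
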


\begin{proof}
Note that for $g$ fulfilling Assumption~\ref{hypo2} functional $\mathcal{F}_1$ is well defined, and it is also of class $C^1$.
The proof follows from the observation $\mathcal{F}_1[0,0]=0$,
the fact that the linear map $(\mathcal{F}_1)_z[0,0]=-\frac{d^2}{dt^2}+(N-4)\frac{d}{dt}+(2N-4)$
is bijective when subject to the boundary conditions $z(\pm \infty)=0$ for any $N \ge 2$
due to the absence of the null eigenvalue in its spectrum and the application of the \emph{implicit function theorem}.
\end{proof}

\begin{remark}
As in the previous cases, this result trivially implies the existence of an isolated classical solution when $|\lambda|$ is small enough.
\end{remark}

\subsection{Non-existence of solution}

Our first result relates to the sharpness of the results in the previous subsections.

\begin{theorem}\label{sharp}
Assumptions~\ref{hypo1} and~\ref{hypo2} are sharp.
\end{theorem}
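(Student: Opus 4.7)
The plan is to prove sharpness by showing that Assumptions~\ref{hypo1} and~\ref{hypo2} are not merely sufficient for the functional framework of Theorems~\ref{imfudina} and~\ref{imfues} but strictly necessary: if either assumption fails, the forcing term fails to lie in the codomain of $\mathcal{F}$ (respectively $\mathcal{F}_1$), and no classical solution can exist in $\tilde{C}^2_0(\mathbb{R}_+)$ (respectively $\hat{C}^2_0(\mathbb{R})$) for any $\lambda\neq 0$.

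First I would exhibit, for each $N\ge 2$, an explicit $g\in L^1([0,1])$ violating the limit in Assumption~\ref{hypo1}. A clean choice is $g(s) \equiv c s^{N-5}$ on $(0,1/2]$ with $N\ge 4$ (extended by zero or by a bounded function elsewhere), so that $\int_0^r g(s)\,ds \sim \tilde c\, r^{N-4}$ as $r\to 0^+$, and hence
$$e^{(N-3) t}\int_0^{e^{-t}} g(s)\,ds \;\sim\; \tilde c \, e^{t} \qquad \text{as } t\to +\infty,$$
which diverges. For $N\in\{2,3\}$ one analogously takes $g$ such that the indefinite integral $G(r)=\int_0^r g$ satisfies $r^{-(N-3)}G(r)\not\to 0$ as $r\to 0^+$, which is possible with $g\in L^1$. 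A parallel construction on $\mathbb{R}$ breaks Assumption~\ref{hypo2}.

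Next, I would argue by contradiction: fix $\lambda\neq 0$ and assume $z\in\tilde{C}^2_0(\mathbb{R}_+)$ satisfies $\mathcal{F}[z,\lambda]=0$, i.e.
\begin{equation}\nonumber
\lambda\, e^{(N-4)t}\int_0^{e^{-t}} g(s)\,ds \;=\; -z'' + (N-4)z' + (2N-4)z - \alpha_{2,N} z^2.
\end{equation}
Multiplying by $e^t$, the RHS equals $e^t[-z''+(N-4)z'+(2N-4)z-\alpha_{2,N}z^2]$. The characteristic exponents of the linear part are $N-2$ and $-2$, and a variation-of-parameters bootstrap shows that any $C^2$ solution in the weighted decay class (where $e^t z\to 0$) automatically satisfies $e^t z', e^t z''\to 0$ as $t\to\infty$ — this is the natural Banach-space interpretation of $\tilde{C}^2_0$ underlying the implicit function theorem argument of Theorem~\ref{imfudina}. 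Since $e^t z^2=(e^t z)\cdot z\to 0$ as well, the entire RHS vanishes in the limit, forcing
$$\lambda\, e^{(N-3) t}\int_0^{e^{-t}} g(s)\,ds \;\longrightarrow\; 0, \qquad t\to +\infty,$$
which, together with $\lambda\neq 0$, contradicts the construction of $g$. Hence Assumption~\ref{hypo1} is necessary. The argument for Assumption~\ref{hypo2} is verbatim the same, applied symmetrically as $t\to\pm\infty$ on the entire real line.

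The main obstacle I anticipate is the bootstrap step concluding $e^t z', e^t z''\to 0$ from $e^t z\to 0$. The definition of $\tilde{C}^m_0$ as literally stated only demands the weighted decay of the function itself, so one must either sharpen the definition to a weighted $C^2$-norm (as is implicit in the use of the implicit function theorem earlier) or use the ODE to upgrade decay: the homogeneous solutions $e^{(N-2)t}$ and $e^{-2t}$ together with the Green's function representation give, for a solution of the perturbed equation, automatic control $|z^{(j)}(t)|\lesssim e^{-2t}$ at $+\infty$ whenever the forcing admits such a decay rate. Once this bootstrap is in place, the contradiction outlined above closes the proof.
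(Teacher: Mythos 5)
Your first step (exhibiting a datum $g$ that violates the assumption) matches the spirit of the paper, but your second step --- deriving the contradiction by multiplying the ODE by $e^t$ and letting $t\to+\infty$ --- has a genuine gap. The conclusion requires $e^t z'(t)\to 0$ and $e^t z''(t)\to 0$, which the literal definition of $\tilde{C}^2_0$ does not provide, and the bootstrap you propose to supply it is circular: the Green's-function/variation-of-parameters estimate gives $|z^{(j)}(t)|\lesssim e^{-2t}$ only \emph{when the forcing itself decays at that rate}, and the whole point of the construction is that the forcing does not. (A function like $e^{-2t}\sin(e^{3t})$ shows that weighted decay of $z$ alone controls nothing about $z''$.) Your fallback --- redefining $\tilde{C}^2_0$ with a weighted $C^2$-norm --- makes the limit argument work but proves only that the assumption is necessary for the functional framework of Theorem~\ref{imfudina}, i.e.\ non-existence in a smaller space, which is a weaker form of sharpness. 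The paper avoids derivatives entirely: it rewrites the boundary value problem as $z(t)=\int G(t,s)\bigl[\alpha_{2,N}z(s)^2+f_2(s)\bigr]\,ds$ with a \emph{positive} kernel $G$, drops the nonnegative term $\alpha_{2,N}z^2$ to get the pointwise lower bound $z\ge z_1$ where $z_1$ is the explicit linear response to $f_2$, chooses $g$ so that $f_2(t)\ge e^{-t}\mathbbm{1}_{[0,\infty[}(t)$, and computes that $e^tz_1(t)$ stays bounded away from $0$, contradicting the boundary condition. That positivity/comparison argument is the missing ingredient in your write-up.

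Two smaller points on your construction of the violating datum. The example $g(s)=cs^{N-5}$ is not in $L^1([0,1])$ when $N=4$ (it is $c/s$); for $N=4$ take instead, e.g., a constant $g$, for which $e^{(N-3)t}\int_0^{e^{-t}}g=c\not\to 0$. And for $N\in\{2,3\}$ no violating $g\in L^1([0,1])$ exists at all: writing $r=e^{-t}$, the quantity is $r^{3-N}\int_0^r g$, which tends to $0$ for \emph{every} $L^1$ datum when $N\le 3$ by absolute continuity of the integral, so Assumption~\ref{hypo1} is automatic there and the sharpness statement is only non-vacuous for $N\ge 4$.
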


\begin{proof}
In the case of the entire solutions we re-write problem~\eqref{entire} as
\begin{eqnarray}\nonumber
z(t) &=& \frac{e^{-2t}}{N}\int_{-\infty}^t e^{2s} \, f_1(s) \, ds + \frac{e^{(N-2)t}}{N} \int_t^\infty e^{-(N-2)s} \, f_1(s) \, ds \\ \nonumber
&\ge& \frac{e^{-2t}}{N}\int_{-\infty}^t e^{2s} \, f_2(s) \, ds + \frac{e^{(N-2)t}}{N} \int_t^\infty e^{-(N-2)s} \, f_2(s) \, ds =: z_1(t).
\end{eqnarray}
where
\begin{eqnarray}\nonumber
f_1(t) &=& \alpha_{2,N} [z(t)]^2 +  f_2(t), \\ \nonumber
f_2(t) &=& \lambda e^{(N-4)t} \int_0^{e^{-t}} g(s) \, ds.
\end{eqnarray}
Now, choose $g(s)$ such that
$$
f_2(t)\geq e^{-t} \mathbbm{1}_{[0,\infty[}(t),
$$
or equivalently
$$
 \lambda e^{(N-3)t} \int_0^{e^{-t}} g(s) \, ds\geq 1\qquad \mbox{ for all }t>0.
$$
It is an easy exercise to check that the boundary condition at $+\infty$ is never fulfilled in these cases for $z_1(t)$, in fact $z_1(+\infty)=+\infty$
what in turn forces the same for $z(t)$. Identical results follow for both Dirichlet and Navier problems.
\end{proof}

The following result indicates that, even when we can continue the solution for small $|\lambda|$, this extension cannot be carried out
for an arbitrarily large value of this parameter.

\begin{theorem}
Problems~\eqref{dirichletr}, \eqref{navierr} and~\eqref{entire} have no solutions whenever $g \ge 0$,
$\text{ess sup} \, g >0$, it obeys the corresponding Assumption~\ref{hypo1} or~\ref{hypo2} and $\lambda >0$ is large enough.
\end{theorem}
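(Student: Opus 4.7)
The plan is to exploit a positive Green's function representation for the linear operator $L=-\partial_t^2+(N-4)\partial_t+(2N-4)$, together with a quadratic obstruction on a compact subset of the domain. For the entire case, the proof of Theorem~\ref{sharp} already exhibits the Green's function
\[
G(t,s)=\frac{1}{N}\begin{cases} e^{2(s-t)}, & s\leq t,\\ e^{-(N-2)(s-t)}, & s\geq t,\end{cases}
\]
which is strictly positive; for the Dirichlet and Navier problems on $[0,+\infty[$ an analogous variation-of-parameters construction from the fundamental solutions $e^{(N-2)t}$ and $e^{-2t}$ of $Lu=0$ produces strictly positive Green's functions $G_D,G_N$ incorporating the corresponding boundary condition at $t=0$.

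With this in hand any solution of the three problems can be written as
\[
z(t)=\int G(t,s)\bigl[\alpha_{2,N}z(s)^2+f_2(s)\bigr]\,ds,\qquad f_2(t)=\lambda\,e^{(N-4)t}\int_0^{e^{-t}}g(s)\,ds,
\]
and the hypotheses $G\geq 0$, $z^2\geq 0$, $\lambda>0$, $g\geq 0$ force $z\geq 0$. Next I would fix a compact set $K$ in the interior of the domain and set
\[
a:=\inf_{K}z\geq 0,\qquad \gamma:=\inf_{K\times K}G>0,\qquad c_g:=\inf_{t\in K}\int G(t,s)\,h_2(s)\,ds>0,
\]
with $h_2(t):=e^{(N-4)t}\int_0^{e^{-t}}g(s)\,ds$; the strict positivity of $c_g$ uses $g\geq 0$, $\mathrm{ess\,sup}\,g>0$ and the positivity of $G$. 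Bounding the integral equation from below on $K$ by each of the two terms separately yields, for every $t\in K$,
\[
z(t)\geq \alpha_{2,N}\gamma|K|\,a^2+\lambda c_g,
\]
and taking the infimum over $t\in K$ leads to
\[
a\geq A\,a^2+\lambda c_g,\qquad A:=\alpha_{2,N}\gamma|K|>0.
\]

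The remainder is pure algebra: the inequality $Aa^2-a+\lambda c_g\leq 0$ admits a real nonnegative $a$ only when its discriminant $1-4A\lambda c_g$ is nonnegative, i.e.\ only when $\lambda\leq\lambda_\ast:=1/(4Ac_g)$. For any $\lambda>\lambda_\ast$ the assumed solution cannot exist, and running the argument separately with the respective Green's function proves non-existence in all three problems. The main technical point will be the Sturm--Liouville construction of $G_D$ and $G_N$ in a form that makes the strict positivity on the interior evident; once this is secured the rest of the proof is elementary.
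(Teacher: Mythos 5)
Your argument is correct and reaches the paper's conclusion by a genuinely different route. The paper also starts from the variation-of-parameters representation with the positive kernel built from $e^{(N-2)t}$ and $e^{-2t}$ (the same one displayed in the proof of Theorem~\ref{sharp}), but instead of a discriminant obstruction it runs a two-sided estimate on the shooting value $z(0)$: if $z(0)$ exceeds a bound that is \emph{linear} in $\lambda$ (inequality~\eqref{ineqz01}), the coefficient of the growing mode $e^{(N-2)t}$ never vanishes and the boundary condition at $+\infty$ fails, while feeding the forced part $z_1=\lambda z_\lambda$ back into the nonlinearity gives the \emph{quadratic} lower bound $z(0)\gtrsim\lambda^2$ of inequality~\eqref{ineqz02}; the two are incompatible for large $\lambda$. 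You replace this $\lambda$-versus-$\lambda^2$ comparison by the classical a priori inequality $a\ge Aa^2+\lambda c_g$ for $a=\inf_K z$, whose discriminant $1-4A\lambda c_g$ must be nonnegative. This is cleaner, avoids the shooting set-up entirely, and treats the three problems uniformly, whereas the paper disposes of the Dirichlet and Navier cases only by reference to the shooting parameters of~\cite{n6}. The price is the deferred construction of $G_D$ and $G_N$ and of their interior positivity; this does go through, since the decaying solution $e^{-2t}$ together with the boundary-adapted solutions $e^{(N-2)t}-e^{-2t}$ (Dirichlet) and $e^{(N-2)t}$ (which satisfies $z'(0)-(N-2)z(0)=0$, Navier) are all positive on $]0,\infty[$. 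Two small points worth making explicit when you write this up: your inequality already yields $a\ge\lambda c_g>0$, so the degenerate case $a=0$ needs no separate discussion, and one should check (as the paper implicitly does) that the decay conditions $e^tz(t)\to0$ at the relevant endpoints exclude the homogeneous modes, so that every solution of the boundary value problem really satisfies the integral equation.
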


\begin{proof}
In the case of entire solutions we consider the auxiliary problem
$$
-z'' + (N-4)z' + (2N-4)z = \alpha_{2,N} z^2 + \lambda \, \underbrace{e^{(N-4)t} \int_0^{e^{-t}} g(s) \, ds}_{=: \tilde{f}(t)},
$$
subject to the conditions $z(t) \to 0$ when $t \to -\infty$ and $z(0)=z_0$. This problem can be cast into the form
\begin{eqnarray}\nonumber
z(t) &=& \frac{e^{-2t}}{N} \int_{-\infty}^t e^{2 s} f_1(s) \, ds + e^{(N-2)t} \times \\ \nonumber
& & \left[ z_0 - \frac1N \int_{-\infty}^0 e^{2s} f_1(s) \, ds -\frac1N \int_0^t e^{-(N-2)s} f_1(s) \, ds \right] \\ \nonumber
&\ge& \lambda \frac{e^{-2t}}{N} \int_{-\infty}^t e^{2 s} \tilde{f}(s) \, ds + e^{(N-2)t} \times \\ \nonumber
& & \left[ z_0 - \frac{\lambda}{N} \int_{-\infty}^0 e^{2s} \tilde{f}(s) \, ds -\frac{\lambda}{N} \int_0^t e^{-(N-2)s} \tilde{f}(s) \, ds \right].
\end{eqnarray}
Therefore if
\begin{equation}\label{ineqz01}
z_0 > \lambda \left[ \frac{1}{N} \int_{-\infty}^0 e^{2s} \tilde{f}(s) \, ds + \frac{1}{N} \int_0^\infty e^{-(N-2)s} \tilde{f}(s) \, ds \right],
\end{equation}
then clearly $z(t) \not\to 0$ when $t \to \infty$. Consider now again the original equation
\begin{eqnarray}\nonumber
z(t) &=& \frac{e^{-2t}}{N}\int_{-\infty}^t e^{2s} \, f_1(s) \, ds + \frac{e^{(N-2)t}}{N} \int_t^\infty e^{-(N-2)s} \, f_1(s) \, ds \\ \nonumber
&\ge& \alpha_{2,N} \frac{e^{-2t}}{N}\int_{-\infty}^t e^{2s} \, [z(s)]^2 \, ds \\ \nonumber
& & + \alpha_{2,N} \frac{e^{(N-2)t}}{N} \int_t^\infty e^{-(N-2)s} \, [z(s)]^2 \, ds \\ \nonumber
&\ge& \alpha_{2,N} \frac{e^{-2t}}{N}\int_{-\infty}^t e^{2s} \, [z_1(s)]^2 \, ds \\ \nonumber
& & + \alpha_{2,N} \frac{e^{(N-2)t}}{N} \int_t^\infty e^{-(N-2)s} \, [z_1(s)]^2 \, ds \\ \nonumber
&=& \lambda^2 \left[ \alpha_{2,N} \frac{e^{-2t}}{N}\int_{-\infty}^t e^{2s} \, [z_\lambda(s)]^2 \, ds \right. \\ \nonumber
& & + \left. \alpha_{2,N} \frac{e^{(N-2)t}}{N} \int_t^\infty e^{-(N-2)s} \, [z_\lambda(s)]^2 \, ds \right],
\end{eqnarray}
where $z_\lambda = z_1 / \lambda$. In consequence
\begin{eqnarray}\label{ineqz02}
z(0) &\ge& \lambda^2 \left[ \frac{\alpha_{2,N}}{N}\int_{-\infty}^0 e^{2s} \, [z_\lambda(s)]^2 \, ds \right. \\ \nonumber
& & + \left. \frac{\alpha_{2,N}}{N} \int_0^\infty e^{-(N-2)s} \, [z_\lambda(s)]^2 \, ds \right].
\end{eqnarray}
From~\eqref{ineqz01} and~\eqref{ineqz02} it follows that no solution exists for $\lambda$ large enough.
The proof for both the Dirichlet and Navier problems follows analogously once one chooses the right shooting parameters,
and these parameters can be chosen as in~\cite{n6}.
\end{proof}

\subsection{Negative $\lambda$}

Now we will examine the case when $\lambda <0$ and prove that at least one solution exists always in this range.

\begin{theorem}
Problems~\eqref{dirichletr}, \eqref{navierr} and~\eqref{entire} have at least one solution when $g \ge 0$,
$\text{ess sup} \, g >0$, it obeys the corresponding Assumption~\ref{hypo1} or~\ref{hypo2} and $\lambda <0$.
\end{theorem}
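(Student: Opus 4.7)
The plan is to apply the method of lower and upper solutions to the semilinear equation, working in the transformed variable $z$ of Section~\ref{lambda0}. Writing $\mathcal{L}z:=-z''+(N-4)z'+(2N-4)z$ and $\tilde f(t):=e^{(N-4)t}\int_0^{e^{-t}}g(s)\,ds$, the three problems share the common form
\begin{equation}\nonumber
\mathcal{L}z = \alpha_{2,N} z^2 + \lambda \tilde f(t),
\end{equation}
subject to the corresponding boundary conditions at the finite and/or infinite endpoints. Under the hypotheses $g\ge 0$, $\lambda<0$ one has $\tilde f\ge 0$ and $\lambda\tilde f\le 0$, and this sign configuration drives the whole argument.

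First, I would solve the purely linear auxiliary problem $\mathcal{L}z_0=\tilde f$ with the relevant boundary conditions; bijectivity of $\mathcal{L}$ in the appropriate function spaces has already been recorded in the proofs of Theorems~\ref{imfudina} and~\ref{imfues}, and the explicit Green's representation (built from the two exponentials $e^{-2t}$ and $e^{(N-2)t}$ associated with the characteristic roots of $\mathcal{L}$, as used in the proof of Theorem~\ref{sharp}) has a non-negative kernel, so $z_0\ge 0$. Setting
\begin{equation}\nonumber
\underline{z}:=\lambda z_0 \le 0, \qquad \overline{z}:=0,
\end{equation}
one obtains an ordered pair of sub- and supersolutions since
\begin{eqnarray}\nonumber
\mathcal{L}\underline{z} - \alpha_{2,N}\underline{z}^{\,2} - \lambda\tilde f &=& -\alpha_{2,N}\underline{z}^{\,2} \le 0, \\ \nonumber
\mathcal{L}\overline{z} - \alpha_{2,N}\overline{z}^{\,2} - \lambda\tilde f &=& -\lambda\tilde f \ge 0,
\end{eqnarray}
and both $\underline{z}$ and $\overline{z}$ comply with the prescribed boundary conditions.

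To produce a solution in the order interval $[\underline{z},\overline{z}]$ I would freeze the nonlinearity by truncation, setting
\begin{equation}\nonumber
F(t,z):=\alpha_{2,N}\bigl[\min\bigl(\max(z,\underline{z}(t)),0\bigr)\bigr]^{2}+\lambda\tilde f(t),
\end{equation}
which is pointwise dominated by the rapidly decaying function $\alpha_{2,N}\lambda^{2} z_{0}(t)^{2}+|\lambda|\tilde f(t)$. The fixed-point map $\mathcal{T}z:=\mathcal{L}^{-1}[F(\cdot,z)]$ is then continuous and compact on the weighted Banach spaces $\tilde{C}^{2}_{0}(\mathbb{R}_{+})$ (Dirichlet, Navier) or $\hat{C}^{2}_{0}(\mathbb{R})$ (entire) of Definitions~\ref{pecbc1} and~\ref{pecbc3}, by Arzel\`a--Ascoli combined with the exponential decay of the Green's kernel. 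Schauder's fixed-point theorem then provides a $z$ with $\mathcal{T}z=z$, and a standard comparison step---equivalently the weak maximum principle for $\mathcal{L}$, available because the homogeneous problem admits only the trivial solution---yields $\underline{z}\le z\le\overline{z}$, so the truncation is inactive and $z$ solves the original equation.

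The main obstacle is verifying, in the entire-solution case, that the fixed point actually lies in $\hat{C}^{2}_{0}(\mathbb{R})$, i.e.\ that $e^{t}z(t)\to 0$ as $t\to\pm\infty$. This should reduce to inserting the explicit integral representation for $z$, using the a posteriori bound $|z|\le|\lambda|z_{0}$, and invoking Assumption~\ref{hypo2}; the Dirichlet and Navier cases are slightly simpler because one boundary condition is imposed at the finite endpoint $t=0$, where no weight is required.
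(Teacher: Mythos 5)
Your proposal is correct and follows essentially the same route as the paper: both arguments rest on the positive invertibility of $L[z]=-z''+(N-4)z'+(2N-4)z$ under the respective boundary conditions, take the lower solution $\underline{z}=L^{-1}[\lambda\tilde f]\le 0$ and the upper solution $\overline{z}\equiv 0$, and conclude by the method of upper and lower solutions. The only divergence is the final step: the paper passes to the limit along the monotone iteration $\xi_{n+1}=L^{-1}[\alpha_{2,N}\xi_n^2+\lambda\tilde f]$, whereas you close with truncation plus Schauder's fixed point theorem, an equally standard completion that is arguably more robust here since $z\mapsto z^2$ is decreasing (not increasing) on the negative order interval $[\underline{z},0]$.
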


\begin{proof}
The proof is based on the classical method of upper and lower solutions (see for instance \cite{opial}). First, let us note that the differential operator
$$
L[z]:=-z'' + (N-4)z' + (2N-4)z
$$
with the respective boundary conditions of problems ~\eqref{dirichletr}, \eqref{navierr} and~\eqref{entire}, is positively invertible, that is, for any function $f(t)$ positive a.e., the linear equation $Lz=f$  has a unique positive solution $z=L^{-1} f$ with the corresponding boundary conditions. Then, the function $\xi_0(t)=L^{-1}\lambda \tilde{f}$ verifies
$$
L\xi_0=\lambda \tilde{f}\leq \alpha_{2,N} \xi_0^2 +\lambda \tilde{f}
$$
and it is a lower solution of the problem. Moreover, $\alpha_0(t)$ is negative because of the signs assumed for $\lambda$ and $g$. Also, it is direct to check that $\beta\equiv 0$ is an upper solution. Then, the recurrence $\xi_{n+1}=L^{-1}[\alpha_{2,N} \xi_n^2 +\lambda \tilde{f}]$ generates a monotone sequence of lower solutions converging uniformly to a solution of the problem.
\end{proof}

\section{Conclusions, further results, and open questions}\label{conclusions}

This work has been devoted to the analysis of the existence of radial solutions to biharmonic $2-$Hessian problems.
Our results have been divided into two blocks. The first concerns the autonomous problems, that is,
the particular choice $\lambda=0$. In this case we have proven
non-existence of non-trivial solutions to problems~\eqref{dirichletr} and~\eqref{navierr},
i.~e. respectively the Dirichlet and Navier homogeneous boundary value problems, for $k=2$ and $N \ge 4$.
We have also found the existence of a continuum of non-trivial entire solutions,
that is solutions to problem~\eqref{entire}, for $k=2$ and $N \ge 4$;
moreover we have given explicit formulas for them in $N=4$.
Finally we have proven non-existence of non-trivial entire solutions for $k=2$ and $N=2,3$.
These results, together with our previous ones~\cite{n6}, highlight in what sense $N=4$ plays the role of critical dimension for the radial
biharmonic $2-$Hessian equation.

We have complemented this first block with a second block in which the non-autonomous equation is analyzed,
that is, $\lambda \neq 0$ is considered. In this case the existence of isolated solutions for small $|\lambda|$
to problems~\eqref{dirichletr}, \eqref{navierr}, and~\eqref{entire} for $k=2$ and $N \ge 2$ has been proven.
The non-existence of solution to problems~\eqref{dirichletr}, \eqref{navierr}, and~\eqref{entire} for $k=2$ and $N \ge 2$
was proven for $\lambda$ large enough, and this result was supplemented with the corresponding existence one for $\lambda <0$
irrespectively of the size of $|\lambda|$.

There is a series of open questions that arises naturally from the present results.
One is the possible continuation of the non-trivial solutions explicitly computed in Section~\ref{entiresol} when $N=4$ for $\lambda \neq 0$.
Note that in this case we cannot employ the
same arguments as in Theorems~\ref{imfudina} and~\ref{imfues}
since the linearization of the nonlinear operator about these solutions gives rise to the linear operator
$$
-\frac{d^2}{dt^2} + 1 - \frac{48 \, e^{2(t+t_0)}}{(e^{2t} + e^{2t_0})^2}.
$$
The kernel of this operator, provided with the correct boundary conditions, is spanned by the vector
$$
\frac{e^t \left[e^{4t} + e^{4t_0} -3e^{2(t+t_0)}\right]}{(e^{2t} + e^{2 t0})^3},
$$
what rules out the use of the implicit function theorem in this concrete context. Note that this fact is in agreement with the existence of a continuum of
solutions and with the dilatation invariance structure of equation~\eqref{rkhessian}. Different techniques, such as the Lyapunov-Schmidt reduction
or global bifurcation arguments, must be employed in this case.

There are plenty of open questions that concern the cases with $k \ge 3$. Actually, for $k=3$ we have the following result:

\begin{theorem}
Problems~\eqref{dirichletr}, \eqref{navierr}, and~\eqref{entire} have no non-trivial solutions for $k=3$ and $\lambda=0$.
\end{theorem}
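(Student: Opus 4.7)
The plan is to use an energy/Lyapunov argument that handles all three boundary value problems simultaneously. A key observation is that for $k=3$ the exponential factor $e^{(k-3)t}$ reduces to $1$, so the $w$-equation is already autonomous and the change of variables~\eqref{change} is not needed (indeed $\gamma=0$). Since $2\le k\le N$ forces $N\ge 3$, the common ODE to analyze is
\begin{equation*}
-w''+(N-2)w'+(N-1)w=\alpha_{3,N}\,w^3,\qquad \alpha_{3,N}=\tfrac{(N-1)(N-2)}{6}.
\end{equation*}
Multiplying by $w'$ and regrouping the exact terms gives the identity
\begin{equation*}
\frac{d}{dt}E(t)=-(N-2)\bigl(w'(t)\bigr)^2,\qquad E(t):=-\tfrac12(w')^2+\tfrac{N-1}{2}w^2-\tfrac{\alpha_{3,N}}{4}w^4,
\end{equation*}
so $E$ is monotone non-increasing for $N\ge 3$.

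Next I would pin down the boundary behavior of $E$. The linearization at the equilibrium $(0,0)$ of the associated planar system $w'=y$, $y'=(N-2)y+(N-1)w-\alpha_{3,N}w^3$ has characteristic polynomial $\lambda^2-(N-2)\lambda-(N-1)=0$ with roots $\lambda=N-1$ and $\lambda=-1$; hence $(0,0)$ is a saddle with one-dimensional stable and unstable manifolds. Any solution satisfying $w(+\infty)=0$ must lie on the stable manifold, so $w'(t)\to 0$ exponentially, which gives $E(+\infty)=0$. The symmetric statement on the unstable manifold yields $E(-\infty)=0$ whenever $w(-\infty)=0$. This decay step is the only place where some care is required, but it is standard phase-plane analysis.

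With the monotonicity of $E$ and its vanishing at infinity in hand, the three conclusions follow in parallel. For the Dirichlet problem $w(0)=0$ gives $E(0)=-\tfrac12(w'(0))^2\le 0$ while monotonicity yields $E(0)\ge E(+\infty)=0$, so $E(0)=0$ and $w'(0)=0$; then $E\equiv 0$ on $[0,\infty)$ and integrating the energy identity forces $(N-2)\int_0^\infty (w')^2\,dt=0$, hence $w\equiv 0$. For the Navier problem, substituting the boundary condition $w'(0)=(N-1)w(0)$ gives $E(0)=-\tfrac{(N-1)(N-2)}{2}w(0)^2-\tfrac{\alpha_{3,N}}{4}w(0)^4\le 0$, which combined with $E(0)\ge 0$ forces $w(0)=0$, reducing to the previous case. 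For the entire problem $E(\pm\infty)=0$ together with monotonicity yield $E\equiv 0$ on $\mathbb{R}$, and again the energy identity gives $w'\equiv 0$, hence $w\equiv 0$. The main obstacle, such as it is, is simply the rigorous verification that the asymptotic condition $w(\pm\infty)=0$ upgrades to $w'(\pm\infty)=0$; this is precisely supplied by the saddle/manifold analysis above.
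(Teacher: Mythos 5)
Your argument is correct, and it is worth noting that it is substantially more explicit than the paper's own proof, which is a one-line sketch: the authors merely observe that for $k=3$ the equation is an autonomous Duffing oscillator with negative damping and invoke ``an elementary analysis of the phase plane similar to that of system \eqref{planar-syst}'' --- i.e.\ the dissipative argument used for the Dirichlet problem with $N>4$ (positively invariant region bounded by a piece of the stable manifold, absence of closed orbits, instability of the nonzero equilibria). Your monotone energy functional $E$ with $E'=-(N-2)(w')^2\le 0$ (valid for all $N\ge 3$, which is forced by $k=3\le N$) achieves the same end more cleanly: it treats the Dirichlet, Navier and entire problems uniformly through the single chain $0=E(+\infty)\le E(\text{initial point})\le 0$, and it sidesteps the extra bookkeeping caused by the cubic nonlinearity having \emph{two} nonzero equilibria $w=\pm\sqrt{6/(N-2)}$, which the invariant-region argument would have to handle on both sides. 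The computations check out: the characteristic roots at the origin are indeed $N-1$ and $-1$, the Navier substitution gives $E(0)=-\tfrac{(N-1)(N-2)}{2}w(0)^2-\tfrac{\alpha_{3,N}}{4}w(0)^4\le 0$, and the one genuinely technical point --- upgrading $w(\pm\infty)=0$ to $(w,w')\to(0,0)$ so that $E$ vanishes at infinity --- is correctly identified and follows from variation of constants for $y'=(N-2)y+h$ with $h\to 0$ together with the $\omega$-limit set being an invariant subset of $\{w=0\}$.
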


\begin{proof} For $k=3$, the relevant equation
$$
-w'' + (N-2)w' + (N-1)w =\frac{1}{k} \binom{N-1}{2} w^3
$$
is autonomous, and in fact an autonomous Duffing oscillator with negative damping~\cite{KB}.
Then an elementary analysis of the phase plane (similar to that of system \eqref{planar-syst}) leads to the conclusion.
\end{proof}

All throughout this work we have illustrated how $N=4$ plays the role of critical dimension for the $2-$Hessian nonlinearity.
This last result on the other hand suggests that the existence of an analogous concept of critical dimension may not be possible
for $k = 3$. In fact, this could be so for any $k > 3$ too, something that would agree with the complementary
results in~\cite{n0} (in particular, these results suggest that there is no critical dimension for $k>3$ and
it takes the marginal value $N=3$ for $k=3$).
We leave this problem as an open question, as well as the extension
of our present results, when possible, to this range of values of $k$.

\vskip20mm
\noindent
{\footnotesize
Carlos Escudero\par\noindent
Departamento de Matem\'aticas\par\noindent
Universidad Aut\'onoma de Madrid\par\noindent
{\tt carlos.escudero@uam.es}\par\vskip1mm\noindent
\& \par\vskip1mm\noindent
Pedro J. Torres\par\noindent
Departamento de Matem\'atica Aplicada\par\noindent
Universidad de Granada\par\noindent
{\tt ptorres@ugr.es}\par\vskip1mm\noindent
}
\end{document}